\newtheorem{theorem}{Theorem}[section]
\begin{document}

\begin{center}
{\Large  Some existence and nonexistence results for a Schr\"odinger-Poisson type system}
\end{center}

\vskip 5mm

\begin{center}
{\sc Yutian Lei} \\
\vskip 3mm
Institute of Mathematics\\
School of Mathematical Sciences\\
Nanjing Normal University\\
Nanjing, 210023, China\\
Email:leiyutian@njnu.edu.cn
\end{center}

\vskip 5mm {\leftskip5mm\rightskip5mm \normalsize
\noindent{\bf{Abstract}}
In this paper, we study the Schr\"odinger-Poisson system
$$
\left \{
   \begin{array}{l}
      -\Delta u=\sqrt{p}u^{p-1}v, \quad u>0 \quad in \quad R^n,\\
      -\Delta v=\sqrt{p}u^p, \quad v>0 \quad in \quad R^n
   \end{array}
   \right.
$$
with $n \geq 3$ and $p>1$. We investigate the existence and the
nonexistence of positive classical solutions with the help of an integral system involving
the Newton potential
$$
\left \{
   \begin{array}{l}
      u(x)=c_1\displaystyle\int_{R^n}\frac{u^{p-1}(y)v(y)dy}{|x-y|^{n-2}}, \quad u>0 \quad in \quad R^n,\\
      v(x)=c_2\displaystyle\int_{R^n}\frac{u^p(y)dy}{|x-y|^{n-2}} \quad v>0 \quad in \quad R^n.
   \end{array}
   \right.
$$
First, the system has no solution when $p\leq \frac{n}{n-2}$. When $p>\frac{n}{n-2}$,
the system has a singular solution on $R^n \setminus \{0\}$ with
slow asymptotic rate $\frac{2}{p-1}$. When $p<\frac{n+2}{n-2}$,
the system has no solution in $L^{\frac{n(p-1)}{2}}(R^n)$. In fact, if the system has solutions
in $L^{\frac{n(p-1)}{2}}(R^n)$, then $p=\frac{n+2}{n-2}$ and all the positive classical
solutions can be classified as $u(x)=v(x)=c(\frac{t}{t^2+|x-x^*|^2})^{\frac{n-2}{2}}$,
where $c,t$ are positive constants. When $p>\frac{n+2}{n-2}$, by the shooting method
and the Pohozaev identity, we find another pair of radial solution
$(u,v)$ satisfying $u \equiv v$ and decaying with slow rate $\frac{2}{p-1}$.

\par
\noindent{\bf{Keywords}}: Schr\"odinger-Poisson equation, positive classical
solution, existence and nonexistence, critical and noncritical conditions
\par
{\bf{MSC2010}} 35J10, 35J47, 35Q55, 45E10, 45G05}

\newtheorem{proposition}[theorem]{Proposition}

\renewcommand{\theequation}{\thesection.\arabic{equation}}
\catcode`@=11
\@addtoreset{equation}{section}
\catcode`@=12

\section{Introduction}  

Recently, many authors devoted to study the nonlocal stationary Schr\"odinger equation
\begin{equation} \label{PDE}
-\Delta u=pu^{p-1}(|x|^{2-n}*u^p), \quad u>0 \quad in \quad R^n,
\end{equation}
where $n \geq 3$ and $p>1$. In particular, Moroz and Van Schaftingen \cite{MVS}
studied the existence of the supersolutions and listed several sufficient conditions.

Equation (\ref{PDE}) is defined in many places. One is the example
3.2.8 in the book \cite{Caz}. A more general form is the Choquard type equation
in the papers \cite{Lei} and \cite{MZ}. Another interesting work related to (\ref{PDE}) is the
paper \cite{JL-Z} and the references therein.
Equation (\ref{PDE}) is also helpful in understanding the blowing
up or the global existence and scattering of the solutions of the dynamic Hartree
equation (cf. \cite{LMZ}), which arises in the study of boson stars and other physical phenomena,
and also appears as a continuous-limit model for mesoscopic molecular structures
in chemistry. Such an equation also arises in the Hartree-Fock theory of the
nonlinear Schr\"odinger equations (cf. \cite{LS}). More related mathematical and
physical background can be found in \cite{BAM}, \cite{GV}, \cite{Na}, \cite{Smets}
and the references therein.

Since (\ref{PDE}) has a convolution term,
it seems difficult to investigate the existence directly.
Write
$$
v(x)=\sqrt{p}\int_{R^n}\frac{u^p(y)dy}{|x-y|^{n-2}}.
$$
Then $v>0$ in $R^n$. Noting the relation between the Newton potential and the
convolution properties of Dirac function, we see that
$$
-\Delta v(x)=\sqrt{p}(-\Delta |x|^{2-n})*u^p =\sqrt{p}\delta_x
*u^p=\sqrt{p}u^p(x),
$$
where $\delta_x$ is the Dirac mass at $x$.
Thus, the positive solution of (\ref{PDE}) must satisfy the
following system
\begin{equation} \label{Sys}
\left \{
   \begin{array}{l}
      -\Delta u=\sqrt{p}u^{p-1}v, \quad u>0 \quad in \quad R^n,\\
      -\Delta v=\sqrt{p}u^p, \quad v>0 \quad in \quad R^n.
   \end{array}
   \right.
\end{equation}
It is a simplified model of the Schr\"odinger-Piosson system (cf.
\cite{Amst}, \cite{HW} and references therein).

Quittner and Souplet \cite{QS} studied
positive solutions of another PDE system
\begin{equation} \label{fgf}
\left \{
   \begin{array}{l}
      -\Delta u=v^p u^r, \quad u>0 \quad in \quad R^n,\\
      -\Delta v=v^s u^q, \quad v>0 \quad in \quad R^n.
   \end{array}
   \right.
\end{equation}
They proved the following results:

(R1) If $n \geq 3$, $p-s=q-r \geq 0$ and $0 \leq r,s \leq \frac{n}{n-2}$,
then positive solutions $u,v$ satisfy $u \equiv v$.

(R2) If $n \geq 3$, $p-s=q-r \geq 0$, then nonnegative solutions $u,v$ satisfy
$u \geq v$ or $v \geq u$.

\vskip 5mm
In this paper, we study the
existence of positive classical solutions to the
Schr\"odinger-Piosson type system (\ref{Sys}). Besides (R1) and (R2),
we have further existence and nonexistence results and state them in three cases.

\vskip 5mm

First we consider the subcritical case $p \in (1,2^*-1)$, where $2^*
=\frac{2n}{n-2}$.
We have the following nonexistence results.

\begin{theorem} \label{th1.1}
(1)If $p \leq \frac{n}{n-2}$, then (\ref{Sys}) has no positive solution.

(2) If $2 \leq p<2^*-1$, then (\ref{Sys}) has no positive solution in
$L^{\frac{n(p-1)}{2}}(R^n)$.
\end{theorem}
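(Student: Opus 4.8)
The plan is to reduce each assertion, as far as possible, to a scalar problem and to scale-covariant identities.

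\emph{Part (1).} Since $p\le\frac{n}{n-2}$ forces $p-1\le\frac{2}{n-2}\le\frac{n}{n-2}$, the system (\ref{Sys}) — after the harmless rescaling of $u,v$ that turns $\sqrt p$ into $1$ — matches (\ref{fgf}) with $v$-powers $(1,0)$ and $u$-powers $(p-1,p)$; since $1-0=p-(p-1)=1\ge0$ and $0\le p-1\le\frac{2}{n-2}\le\frac{n}{n-2}$, hypothesis (R1) applies and every positive solution satisfies $u\equiv v$. Both equations then reduce to the Lane--Emden equation $-\Delta u=\sqrt p\,u^{p}$ in $R^n$, which has no positive solution for $1<p<\frac{n+2}{n-2}$ by the classical Gidas--Spruck nonexistence theorem; as $p\le\frac{n}{n-2}<\frac{n+2}{n-2}$, this proves (1). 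One can also argue directly on the integral system: from $v(x)\ge c_2(2|x|)^{2-n}\int_{B_1}u^p$ and its analogue for $u$ one gets $u,v\ge c\,|x|^{2-n}$ for $|x|\ge1$, and substituting back while estimating the integral over $B_{|x|}$ produces the recursion $a_{k+1}=p\,a_k-2$ for the decay exponents with $a_0=n-2$; hence $a_k-\frac{2}{p-1}=p^k\!\left(a_0-\frac{2}{p-1}\right)\to-\infty$ when $p<\frac{n}{n-2}$, and once $a_k<0$ the bound $u(x)\ge A_k|x|^{|a_k|}$ contradicts the fact that a positive superharmonic function has non-increasing (hence bounded) spherical averages. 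At the borderline $p=\frac{n}{n-2}$, where $a_0$ is the fixed point $\frac{2}{p-1}$, the same iteration instead produces $u(x)\ge\gamma_k|x|^{2-n}(\log|x|)^{m_k}$ with $m_k$ growing like $p^k$ while $\gamma_k$ decays only like $e^{-Cp^k}$, so for $|x|$ past a fixed threshold the right-hand side tends to $+\infty$ as $k\to\infty$, which is impossible.

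\emph{Part (2).} Let $(u,v)$ be a positive solution with $u\in L^{\frac{n(p-1)}{2}}(R^n)$; by (1) we may assume $p>\frac{n}{n-2}$. The integrability hypothesis rules out any additive positive constant in the Newton-potential representation, so $u=c_1\int\frac{u^{p-1}v}{|x-y|^{n-2}}\,dy$ and $v=c_2\int\frac{u^p}{|x-y|^{n-2}}\,dy$ hold exactly. A Hardy--Littlewood--Sobolev estimate closes the integral system in $L^{\frac{n(p-1)}{2}}$ (so $v$ lies there too), and a regularity-lifting bootstrap then yields $u,v\in L^\infty$ together with the fast decay $u(x),v(x)\le C(1+|x|)^{-(n-2)}$. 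Consequently $u^pv\lesssim(1+|x|)^{-(n-2)(p+1)}$ is integrable (here $p>\frac{2}{n-2}$), so the quantities $A:=\int_{R^n}|\nabla u|^2$ and $B:=\int_{R^n}\!\int_{R^n}\frac{u^p(x)u^p(y)}{|x-y|^{n-2}}\,dx\,dy$ are finite and the integrations by parts below involve no boundary term at infinity. Testing $-\Delta u=\sqrt p\,u^{p-1}v$ against $u$, together with $v=\sqrt p\int\frac{u^p}{|x-y|^{n-2}}\,dy$, gives the Nehari relation $A=pB$; the Pohozaev identity — equivalently, $\frac{d}{d\lambda}J(u(\cdot/\lambda))|_{\lambda=1}=0$ for $J(u)=\frac12\int|\nabla u|^2-\frac12\int\!\int\frac{u^p(x)u^p(y)}{|x-y|^{n-2}}$, which scales as $\frac12\lambda^{n-2}A-\frac12\lambda^{n+2}B$ — gives $(n-2)A=(n+2)B$. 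Since $u>0$ forces $B>0$, combining the two yields $p=\frac{n+2}{n-2}$, contradicting $p<\frac{n+2}{n-2}$. Hence no such solution exists.

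I expect the decay step in (2) to be the main obstacle: extracting the pointwise rate $u,v=O(|x|^{2-n})$ from mere membership in the scale-critical Lebesgue space $L^{\frac{n(p-1)}{2}}$. This is also where the hypothesis $p\ge2$ enters, keeping $u^{p-1}$ — and hence the bootstrap and the test-function identities — under control; the Pohozaev computation itself is routine once the decay is established. In (1), the only delicate point in the direct approach is the logarithmic iteration at $p=\frac{n}{n-2}$.
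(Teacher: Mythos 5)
Your argument for part (1) is correct but takes a genuinely different route from the paper. You rescale away $\sqrt p$, match (\ref{Sys}) with (\ref{fgf}) (here $r=p-1\le\frac{2}{n-2}$, $s=0$, and the differences of exponents are both $1$), invoke (R1) to get $u\equiv v$, and finish with the Gidas--Spruck nonexistence theorem for the subcritical Lane--Emden equation. The paper instead passes to the integral system (\ref{IE}) and proves Theorem \ref{th2.1} directly: a shrinking-exponent iteration ($a_{j+1}=(2p-1)a_j-4$) for $p<\frac{n}{n-2}$ and a separate integral estimate on annuli at $p=\frac{n}{n-2}$. Your route is shorter but outsources the work to the two quoted theorems; the paper's is self-contained at the level of (\ref{IE}) and produces the lower bound (\ref{aq}) that is reused later. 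Your backup ``direct'' iteration is essentially the paper's with the coarser recursion $a_{k+1}=pa_k-2$ (same fixed point $\frac{2}{p-1}$), which is fine for $p<\frac{n}{n-2}$; at the borderline, however, your logarithmic iteration is only asserted: the whole argument hinges on the constants degrading no faster than $e^{-Cp^k}$ (a decay like $e^{-Ckp^k}$ would destroy the contradiction at any fixed $|x|$), and this bookkeeping is exactly what must be verified, whereas the paper sidesteps it with the $L^1$ estimate of $u^{p-1}v$.

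For part (2) your strategy (Nehari identity $A=pB$ plus Pohozaev $(n-2)A=(n+2)B$, forcing $p=2^*-1$) is sound, but note it is not the paper's proof of Theorem \ref{th2.2} (moving planes in integral form plus the Kelvin transform, which breaks translation invariance); it is essentially the paper's Section~3 argument (Theorems \ref{th3.2}--\ref{th3.3}), which indeed also yields (2). The genuine gap is the step you yourself flag: you claim a regularity-lifting bootstrap gives the pointwise rate $u,v\le C(1+|x|)^{-(n-2)}$. Lifting gives $u,v\in L^s$ for all $s>\frac{n}{n-2}$, boundedness, and decay to zero (Theorem \ref{th3.1}), but no algebraic rate: the near-diagonal contribution $\int_{B_{|x|/2}(x)}|x-y|^{2-n}u^p(y)\,dy$ cannot be bounded by $C|x|^{2-n}$ from integrability alone; one needs either radial monotonicity (which the paper obtains precisely from the moving-plane Theorem \ref{prop2.1}) or a separate decay bootstrap seeded by an initial algebraic rate. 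Fortunately the rate is not needed for your identities: as in the proof of Theorem \ref{th3.2}, $u^pv\in L^1$ follows from H\"older and the $L^s$ bounds, $\nabla u,\nabla v\in L^2$ follows from a cutoff argument, and the boundary terms in the Nehari and ball-wise Pohozaev identities vanish along a sequence of good radii $R_j$ as in (\ref{conv}); the formal scaling derivative of $J$ should be replaced by (or justified via) that ball-wise identity. With those substitutions your part (2) becomes a correct alternative to the paper's Theorem \ref{th2.2}; as written, the decay claim is unproved.
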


\paragraph{Remark.}
\begin{enumerate}
\item When $u \equiv v$, (\ref{Sys}) is reduced to the Lane-Emden type equation
\begin{equation} \label{LE}
-\Delta u=\sqrt{p}u^p, \quad u>0~in~R^n.
\end{equation}
Its nonexistence results similar to Theorem \ref{th1.1} can be found in \cite{GS}.
For the system (\ref{Sys}), we here introduce an integral system to
investigate the nonexistence. By the same argument in \cite{CLO} and \cite{SIAM}, we know
that the solution of (\ref{Sys}) satisfies the following integral
system involving Newton potentials
\begin{equation} \label{IE}
\left \{
   \begin{array}{l}
      u(x)=c_1\displaystyle\int_{R^n}\frac{u^{p-1}(y)v(y)dy}{|x-y|^{n-2}}, \quad u>0 \quad in \quad R^n,\\
      v(x)=c_2\displaystyle\int_{R^n}\frac{u^p(y)dy}{|x-y|^{n-2}} \quad v>0 \quad in \quad R^n,
   \end{array}
   \right.
\end{equation}
for some constants $c_1,c_2>0$. By this integral system we can verify the nonexistence of positive solutions
when $p\leq \frac{n}{n-2}$. When $p>\frac{n}{n-2}$, then (\ref{Sys}) always has singular solutions
on $R^n \setminus \{0\}$ (cf. Theorem \ref{th4.1}).

\item The integral system (\ref{IE}) is invariant after
translation. Its $L^{\frac{n(p-1)}{2}}(R^n)$-solutions are radially
symmetric about some point $x^* \in R^n$. In the noncritical case,
Kelvin transformation breaks the translation invariant of
(\ref{IE}). On the other hand, the
$L^{\frac{n(p-1)}{2}}(R^n)$-solutions are still radially symmetric
about the origin in the subcritical case. These facts lead to a
contradiction and hence $L^{\frac{n(p-1)}{2}}(R^n)$-solution does
not exist.
\end{enumerate}

\vskip 5mm Next, we consider the critical case and classify the
$L^{\frac{n(p-1)}{2}}(R^n)$-solutions.

\begin{theorem} \label{th1.2}
Let $u$ be a classical solution of (\ref{Sys}). Then the following items are
equivalent to each other

(1) $u \in L^{\frac{n(p-1)}{2}}(R^n)$;

(2) $u$ is bounded and decaying with the fast rate $n-2$;

(3) $u$ belongs to the homogeneous Sobolev space
$\mathcal{D}^{1,2}(R^n)$;

(4) $u \in L^{2^*}(R^n)$ and $p=2^*-1$;

(5) $u(x) = v(x)
=c(\frac{t}{t^2+|x-x^*|^2})^{\frac{n-2}{2}}$ with constants
$c,t>0$.
\end{theorem}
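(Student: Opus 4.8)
}
The plan is to prove the cycle $(5)\Rightarrow(4)\Rightarrow(1)\Rightarrow(5)$ together with $(1)\Rightarrow(2)$, $(2)\Rightarrow(3)$ and $(3)\Rightarrow(1)$; this gives the full equivalence. Two facts will be used throughout: a classical solution of (\ref{Sys}) forces $p>\frac{n}{n-2}$, since otherwise Theorem \ref{th1.1}(1) would forbid it, and such a solution also solves the integral system (\ref{IE}) with some $c_1,c_2>0$ (Remark after Theorem \ref{th1.1}). The implications $(5)\Rightarrow(4)$ and $(4)\Rightarrow(1)$ are immediate: with $w(x)=c\big(t/(t^2+|x-x^{*}|^2)\big)^{(n-2)/2}$ one has $-\Delta w=\gamma\,w^{(n+2)/(n-2)}$ for an explicit $\gamma=\gamma(n,c)$, so the pair $u=v=w$ solves (\ref{Sys}) precisely when $p=\frac{n+2}{n-2}=2^*-1$ and $c$ is chosen so that $\gamma=\sqrt p$, and then $w$ is bounded with $w(x)\sim|x|^{2-n}$, whence $w\in L^{2^*}(R^n)$; conversely $p=2^*-1$ makes $\frac{n(p-1)}{2}=2^*$, so $(4)$ is literally the statement $(1)$.

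The implication $(1)\Rightarrow(2)$ is the main technical step. The exponent $\frac{n(p-1)}{2}$ is exactly the Lebesgue exponent invariant under the scaling $u\mapsto\lambda^{2/(p-1)}u(\lambda\cdot)$ that preserves (\ref{IE}), so one cannot gain integrability in a single Hardy--Littlewood--Sobolev step. I would instead use the regularity-lifting scheme of \cite{CLO},\cite{SIAM}: split $u=u\,\mathds 1_{\{u>M\}}+u\,\mathds 1_{\{u\le M\}}$, note that the first piece has arbitrarily small $L^{n(p-1)/2}(R^n)$-norm once $M$ is large, and combine this with careful HLS/H\"older bookkeeping for the two equations of (\ref{IE}) to obtain a contraction placing $u$ in every $L^q(R^n)$, $q<\infty$, and then in $L^\infty(R^n)$; inserting this into the second equation of (\ref{IE}) and splitting the integral over $\{|x-y|<1\}$ and $\{|x-y|\ge1\}$ — valid since $u^p\in L^1(R^n)\cap L^\infty(R^n)$ because $p>\frac{n}{n-2}$ — gives $v\in L^\infty(R^n)$. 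A second, quantitative pass over (\ref{IE}), now bounding $\int_{R^n}|x-y|^{2-n}u^{p-1}v\,dy$ and $\int_{R^n}|x-y|^{2-n}u^p\,dy$ with the integrability already obtained, upgrades this to two-sided bounds $c_0|x|^{2-n}\le u(x),\,v(x)\le C_0|x|^{2-n}$ for large $|x|$, i.e. the fast decay rate $n-2$, which is $(2)$.

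It remains to close the loop. For $(1)\Rightarrow(5)$ I would argue that an $L^{n(p-1)/2}(R^n)$-solution of (\ref{IE}) is radially symmetric and decreasing about some point $x^*$ by the method of moving planes (as in \cite{CLO},\cite{SIAM}), and, after a Kelvin transform, also radially symmetric about the origin; following the scheme used for Theorem \ref{th1.1}(2), these two symmetries are incompatible unless $p=2^*-1$. Once $p=2^*-1$, one identifies $u\equiv v$: setting $z=v/u$, a direct computation from (\ref{Sys}) gives $\nabla\cdot(u^2\nabla z)=\sqrt p\,u^{p+1}(z^2-1)$, and testing against $z-1$ over $R^n$ — all boundary integrals at infinity vanish by the decay from $(2)$, and $\int_{R^n}u^2|\nabla z|^2<\infty$ — forces $-\int_{R^n}u^2|\nabla z|^2=\sqrt p\int_{R^n}u^{p+1}(z-1)^2(z+1)$, hence $\nabla z\equiv0$ and $z\equiv1$ since $z>0$; then (\ref{Sys}) collapses to the critical Lane--Emden equation and the Caffarelli--Gidas--Spruck / \cite{CLO} classification yields the explicit form in $(5)$. (Alternatively, (R1) gives $u\equiv v$ directly when $n\ge4$, with (R2) and the monotonicity built into (\ref{IE}) handling the rest.) The implication $(2)\Rightarrow(3)$ is routine: with $u$ bounded and decaying like $|x|^{2-n}$, the right-hand side of the first equation of (\ref{Sys}) is bounded and decays like $|x|^{-p(n-2)}$, so interior gradient estimates give $|\nabla u(x)|\le C|x|^{1-n}$, and $2(n-1)>n$ gives $u\in\mathcal D^{1,2}(R^n)\hookrightarrow L^{2^*}(R^n)$. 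For the last arrow $(3)\Rightarrow(1)$ I would first upgrade $(3)$ to the finite-energy identity $\int_{R^n}|\nabla u|^2=\int_{R^n}|\nabla v|^2=\sqrt p\int_{R^n}u^pv<\infty$, feed $u\in L^{2^*}(R^n)$ into (\ref{IE}), and re-run the bootstrap together with the Kelvin/moving-plane argument; this again produces $p=2^*-1$ and hence $u\in L^{2^*}(R^n)=L^{n(p-1)/2}(R^n)$.

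The main obstacle, as I see it, is the scaling-critical regularity lifting underlying $(1)\Rightarrow(2)$: the HLS/H\"older exponent bookkeeping for the coupled system (\ref{IE}), and the iteration of the decay estimate, must be organized carefully, and one should expect to split into sub-cases according to $n$ and the admissible range of $p$. A close second is making all the integrations by parts at infinity rigorous — in the $z=v/u$ identity, in the Pohozaev identity hidden in the Lane--Emden reduction, and in the energy identities — which is the reason I route the argument through the fast-decay conclusion of $(2)$ rather than working with $\mathcal D^{1,2}(R^n)$ directly; if the direction $(3)\Rightarrow(1)$ still resists, the fallback is to establish a Pohozaev-type identity valid for finite-energy solutions by a truncation argument, forcing $p=2^*-1$ before any bootstrap is attempted.
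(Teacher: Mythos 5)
Your overall architecture (a cycle through the five items) is legitimate, and several ingredients are sound: the regularity-lifting/HLS bootstrap for boundedness, the reduction to the critical Lane--Emden equation via Chen--Li once $p=2^*-1$ and $u\equiv v$ are known, and your identity $\nabla\cdot(u^2\nabla z)=\sqrt p\,u^{p+1}(z^2-1)$ with $z=v/u$ is a correct alternative to the paper's device (the paper instead sets $W=u-v$, notes $\Delta W=\sqrt p\,u^{p-1}W$, and kills $\int|\nabla W|^2+\sqrt p\int u^{p-1}W^2$ using the finite energy of Theorem \ref{th3.2} and boundary terms vanishing along a sequence $R_j$). But there is a genuine gap at the heart of your $(1)\Rightarrow(5)$ step: you propose to force $p=2^*-1$ by the Kelvin-transform/translation-invariance incompatibility ``as in Theorem \ref{th1.1}(2)''. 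That mechanism only works on the subcritical side. After the Kelvin transform the pair solves the weighted system (\ref{wIE}) with $h=n+2-p(n-2)$, and the moving-plane argument of Theorem \ref{prop2.1} needs $h\ge 0$ (the sign of the second term in (\ref{ggg}) and the pinning of the center at the origin both require the weight $|y|^{-h}$ to be singular/decreasing, and the H\"older step needs $p\ge2$ besides). For $p>2^*-1$ one has $h<0$ and the argument collapses, so your route never excludes a supercritical $L^{n(p-1)/2}$-solution. The paper closes exactly this case differently: it first proves the energy identity $\int|\nabla u|^2=\int|\nabla v|^2=\sqrt p\int u^pv$ (Theorem \ref{th3.2}, via cut-offs and a sequence $R_j$ along which boundary integrals vanish), then runs a Pohozaev identity on $B_{R_j}$ (Theorem \ref{th3.3}) to get $\frac{n-2}{2}(p+1)\|\nabla u\|_2^2$-type balance forcing $p=2^*-1$ for \emph{any} $p>\frac n{n-2}$, with no symmetry and no restriction $p\ge2$. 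You do mention a Pohozaev-type truncation argument, but only as a fallback for $(3)\Rightarrow(1)$; in fact it is the indispensable step for $(1)\Rightarrow(5)$ as well, and it should be promoted to the main line of the proof.

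A secondary remark: your $(1)\Rightarrow(2)$ aims to extract the two-sided fast decay $u,v\asymp|x|^{2-n}$ directly from integrability, before knowing $p=2^*-1$. This can be made to work, but not by the iteration you sketch alone: starting only from $u,v\to0$ the local term $\int_{B_{|x|/2}(x)}|x-y|^{2-n}u^{p-1}v\,dy$ gives no rate; one needs the radial monotonicity from the moving planes (Theorem \ref{th3.1} (R3)) together with $u,v\in L^s$ for every $s>\frac n{n-2}$ to get the initial algebraic bound $u(x)\le C|x|^{-\frac ns}$, after which one pass through (\ref{IE}) and $p>\frac n{n-2}$ yields the rate $n-2$ (the lower bound is (\ref{aq})). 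The paper avoids this work entirely: it only proves boundedness and decay to zero at this stage, and obtains the rate $n-2$ a posteriori from the explicit form (5). Since your scheme also proves $(1)\Rightarrow(5)$, you could do the same and spare yourself the quantitative pass; if you keep it, the monotonicity input must be stated explicitly, and note that the symmetry theorem as proved in the paper assumes $p\ge2$, whereas the paper's own chain through Theorems \ref{th3.2}--\ref{th3.4} needs no such restriction.
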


In the special case $u \equiv v$, we recall the Lane-Emden equation (\ref{LE}).
The classification of the solutions of this single equation
has provided an important ingredient in the study of the conformal geometry,
such as the prescribing scalar curvature problem and the extremal functions of
the Sobolev inequalities. It was studied rather extensively (cf. \cite{CGS},
\cite{GNN}, \cite{GS}, \cite{LN} and the reference therein). In
particular, Chen and Li \cite{CL} proved that all the positive solutions of
(\ref{LE}) with the critical exponent $p=2^*-1$ must be the form as
$$
u(x)
=c(\frac{t}{t^2+|x-x^*|^2})^{\frac{n-2}{2}}
$$
with constants $c,t>0$. For the system (\ref{Sys}), we expect to prove both $u \equiv v$
and $p=2^*-1$. Once these results are verified, we can use the result
in \cite{CL} to classify the positive solutions of (\ref{Sys}).

\vskip 5mm
At last, we consider the supercritical case $p>2^*-1$.
We use the shooting method and Pohozaev identity to prove the following result.

\begin{theorem} \label{th1.3}
When $p>2^*-1$ and $p \geq 2$, we can find radial solutions $u,v$
decaying with the slow rate $\frac{2}{p-1}$ when $|x| \to \infty$.
\end{theorem}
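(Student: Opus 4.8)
The plan is to construct the solution in the special form $u\equiv v$. With this ansatz the first equation of \eqref{Sys} becomes $-\Delta u=\sqrt p\,u^{p-1}u=\sqrt p\,u^p$ and the second becomes the same equation, so it suffices to produce a positive radial classical solution of the Lane--Emden equation \eqref{LE} and then set $v:=u$. Such a solution I would obtain by a shooting argument in the initial height: consider the ODE
\[
u''+\frac{n-1}{r}u'+\sqrt p\,u^p=0,\qquad u(0)=1,\quad u'(0)=0,
\]
noting that the solution starting from any height $\alpha>0$ is $u_\alpha(r)=\alpha\,u_1(\alpha^{(p-1)/2}r)$, so there is in fact nothing to tune. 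The soft facts come first: local existence and uniqueness near $r=0$; the energy $E(r)=\tfrac12(u')^2+\tfrac{\sqrt p}{p+1}u^{p+1}$ satisfies $E'(r)=-\tfrac{n-1}{r}(u')^2\le 0$, whence $0<u\le1$ and $u'$ stays bounded, so $u_1$ is defined on all of $[0,\infty)$; and $r^{n-1}u'(r)=-\sqrt p\int_0^r s^{n-1}u^p\,ds<0$, so $u$ is strictly decreasing and $u(r)\downarrow L\ge0$.

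Supercriticality enters first through the Pohozaev identity, to rule out a finite zero of $u_1$ (i.e.\ to force an entire rather than a ``crossing'' solution). If $u(R)=0$ at a first zero $R<\infty$, then $u>0$ on $(0,R)$ and $u'(R)<0$, and the Pohozaev identity on the ball $B_R$ (using $-\Delta u=\sqrt p u^p$ and $u|_{\partial B_R}=0$) reads
\[
\sqrt p\Bigl(\frac{n-2}{2}-\frac{n}{p+1}\Bigr)\int_{B_R}u^{p+1}\,dx=-\frac{R}{2}\int_{\partial B_R}|\partial_\nu u|^2\,dS\le 0 .
\]
Since $p>2^*-1=\tfrac{n+2}{n-2}$ the coefficient $\tfrac{n-2}{2}-\tfrac{n}{p+1}$ is strictly positive while the integral on the left is positive, a contradiction; hence $u>0$ on all of $[0,\infty)$, and then $L=0$ (otherwise $r^{n-1}u'(r)\to-\infty$, forcing $u\to-\infty$).

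It remains to identify the decay rate as $\tfrac{2}{p-1}$. For this I would pass to the Emden--Fowler variables $w(t)=r^{2/(p-1)}u(r)$, $t=\ln r$, which turn the equation into the autonomous one
\[
\ddot w+\Bigl(n-2-\tfrac{4}{p-1}\Bigr)\dot w-\tfrac{2}{p-1}\Bigl(n-2-\tfrac{2}{p-1}\Bigr)w+\sqrt p\,w^p=0 .
\]
The damping coefficient $n-2-\tfrac{4}{p-1}$ is positive \emph{precisely} when $p>2^*-1$, and in $\{w\ge0\}$ there are only the saddle $w=0$ and the nontrivial rest point $w_*=\bigl(\tfrac{1}{\sqrt p}\,\tfrac{2}{p-1}\,(n-2-\tfrac{2}{p-1})\bigr)^{1/(p-1)}$, which is exactly the profile of the singular solution $w_*\,r^{-2/(p-1)}$ and is asymptotically stable (a stable focus or node according to whether $p$ is below or above the Joseph--Lundgren exponent). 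The trajectory attached to $u_1$ leaves the saddle along its unstable branch into $w>0$ as $t\to-\infty$ and, by the positivity above, stays in $\{w>0\}$ for all $t$. The energy $\mathcal E(t)=\tfrac12\dot w^2-\tfrac12\tfrac{2}{p-1}(n-2-\tfrac{2}{p-1})w^2+\tfrac{\sqrt p}{p+1}w^{p+1}$ is nonincreasing (again precisely because the damping is positive), which both bounds the trajectory (a blow-up $w\to\infty$ would make $\mathcal E\to+\infty$) and, via a Barbalat/LaSalle argument, forces $\dot w\to0$ and convergence to a rest point as $t\to+\infty$; that rest point cannot be $0$, since convergence to the hyperbolic saddle would place the orbit on its stable manifold and give $u(r)\sim c\,r^{-(n-2)}$, hence $u\in\mathcal{D}^{1,2}(R^n)$, which by Theorem \ref{th1.2} forces $p=2^*-1$. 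Therefore $w(t)\to w_*$, i.e.\ $u(r)\,r^{2/(p-1)}\to w_*$, and $v:=u$ furnishes the claimed radial classical solution of \eqref{Sys} decaying with the slow rate $\tfrac{2}{p-1}$.

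I expect this last step to be the main obstacle: global existence and positivity are soft (energy monotonicity together with Pohozaev), but extracting the \emph{sharp} asymptotic profile --- ruling out both blow-up and fast decay, and handling the difference between the oscillatory range $2^*-1<p<p_c$ and the monotone range $p\ge p_c$ --- is the delicate phase-plane analysis sketched above, and it is there that Theorem \ref{th1.2} is needed to exclude the $\mathcal{D}^{1,2}$ (fast-decay) alternative.
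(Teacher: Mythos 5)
Your proposal is correct, but it reaches the theorem by a genuinely different route than the paper. The paper never imposes $u\equiv v$ in advance: it shoots for the full ODE system in the parameter $a=V(0)$ (Theorem \ref{th6.1}), uses a Pohozaev-based Liouville theorem for the \emph{system} on balls (Theorem \ref{th6.3}) to rule out the borderline case where $u$ and $v$ vanish together, proves $u,v\to 0$ at infinity, and only afterwards deduces $u\equiv v$ by a Li--Ma type comparison of the two components (Theorem \ref{th4.4}, Step 4); the slow decay is then obtained by quoting the fast/slow dichotomy for the scalar Lane--Emden equation from \cite{YiLi}, \cite{LN}, with the fast-decay branch excluded through Theorem \ref{th1.2} --- the same exclusion mechanism you use. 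You instead take $u\equiv v$ as an ansatz from the start, which is legitimate here because the theorem only asserts existence (and the paper's solution is of this form anyway, as the abstract states); this collapses the shooting to the scale-invariant scalar problem \eqref{LE}, with positivity on all of $[0,\infty)$ coming from the classical scalar Pohozaev identity, and you replace the cited asymptotics by a self-contained Emden--Fowler/LaSalle phase-plane analysis, which even gives the sharper conclusion $r^{2/(p-1)}u(r)\to w_*$ rather than the two-sided bound $c_1\le u(x)|x|^{2/(p-1)}\le c_2$. What the paper's route buys is the system-level machinery (shooting in $a$, the ball nonexistence theorem, and the a posteriori uniqueness $u\equiv v$), which has independent interest; what yours buys is economy and a precise limit at infinity. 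Two points to tighten in a write-up: the energy monotonicity alone gives only $|u|\le 1$ (positivity is supplied by the Pohozaev step, as you in fact arrange), and the assertion that convergence of $w$ to the saddle forces $u\sim c\,r^{2-n}$ with $c>0$ requires the stable-manifold asymptotics for the (only $C^1$, but $C^{1,1}$ since $p\ge 2$) vector field; alternatively you can avoid both the stable manifold and Theorem \ref{th1.2} here by noting that $\mathcal E(t)\to 0$ as $t\to-\infty$, $\mathcal E$ is nonincreasing and must become strictly negative at some finite time (otherwise $\dot w\equiv 0$ and $w\equiv 0$), so the orbit cannot return to the zero-energy point $(0,0)$, i.e.\ $w\to 0$ is impossible and $w\to w_*$ follows directly.
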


Not all the solutions in the supercritical case are radially symmetric.
In section 4 we introduce an example to show that some bounded solutions
are neither radial nor decaying when $|x| \to \infty$.

\vskip 5mm
Another integral system as (\ref{IE}) is the following Lane-Emden type equations
\begin{equation} \label{other}
\left \{
   \begin{array}{l}
      u(x)=c_1\displaystyle\int_{R^n}\frac{v^q(y)dy}{|x-y|^{\lambda}}, \quad u>0 \quad in \quad R^n,\\
      v(x)=c_2\displaystyle\int_{R^n}\frac{u^p(y)dy}{|x-y|^{\lambda}}, \quad v>0 \quad in \quad R^n.
   \end{array}
   \right.
\end{equation}
It is essential in studying the extremal
functions of the Hardy-Littlewood-Sobolev inequality (cf. \cite{Lieb})
$$
    \int_{R^n}\int_{R^n} \displaystyle{\frac{f(x)g(y)}{|x-y|^{\lambda}}} dx dy
    \leq
    C(n,s,\lambda) ||f||_r||g||_s
$$
with
$0 <\lambda < n$, $1< s, r  < \infty$, $f \in L^r(R^n)$ and $g \in
L^s(R^n)$, $\frac{1}{r} +\frac{1}{s} + \frac{\lambda}{n}= 2$.

Define $Tg(x)=\displaystyle{\int}_{R^n} \displaystyle{\frac{g(y)}{|x-y|^{n-\alpha}} }dy$
with $\alpha=n-\lambda$. The Hardy-Littlewood-Sobolev inequality becomes
\[
 ||Tg||_p \leq C(n,s,\alpha) ||g||_{\frac{np}{n+\alpha p}},
\]
where $\frac{n}{n-\alpha}<p< \infty$, and $1<s< n/{\alpha}$. This inequality
will be used in this paper to research the radial symmetry and the
integrability of the solutions of (\ref{IE}).

In the critical case, the classification results for the single equation of (\ref{other})
can be found in \cite{CLO} and \cite{YLi}. In particular,
the method of moving planes of integral forms was introduced in \cite{CLO}.
It has become a powerful tool to handle the qualitative properties including the
existence and the nonexistence, the radial symmetry, and the priori estimates.
Jin and Li \cite{JL} applied a regularity lifting lemma by the contraction maps
to obtain the optimal integrability of positive regular solutions.
Based on this result, \cite{LLM} estimated the fast decay rates when $|x| \to
\infty$.

\section{Subcritical case}

In this section, we always assume that $p<2^*-1$.

\begin{theorem} \label{th2.1} (Liouville theorem)
If $p\leq \frac{n}{n-2}$, then there does
not exist any positive solution of (\ref{IE}).
\end{theorem}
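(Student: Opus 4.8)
The plan is to argue by contradiction, assuming a positive solution $(u,v)$ of the integral system (\ref{IE}) exists when $p \leq \frac{n}{n-2}$, and deriving a growth incompatibility. The starting point is a lower bound for $v$: since $v(x) = c_2 \int_{R^n} |x-y|^{2-n} u^p(y)\,dy$ and $u>0$, for $|x|$ large we have $v(x) \geq c_2 \int_{B_1(0)} |x-y|^{2-n} u^p(y)\,dy \geq C|x|^{2-n}$ for some $C>0$ depending on $\int_{B_1} u^p$. Feeding this into the equation for $u$ gives, again restricting the integral to a fixed ball, a pointwise lower bound $u(x) \geq c_1 \int_{B_1} |x-y|^{2-n} u^{p-1}(y) v(y)\,dy \geq C'|x|^{2-n}$ as well. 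So both $u$ and $v$ are bounded below by a multiple of $|x|^{2-n}$ at infinity; this is the seed for a bootstrap.

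The core of the argument is an iteration that successively improves the lower bound on $u$. Suppose $u(x) \geq A|x|^{-\mu}$ for $|x|$ large, with $\mu < n-2$ initially taken as $\mu = n-2$. Then $v(x) = c_2\int_{R^n} |x-y|^{2-n} u^p(y)\,dy \geq c_2 \int_{|y| \sim |x|} |x-y|^{2-n} u^p(y)\,dy$, and a standard estimate of this Newton-potential integral (splitting into the regions $|y| \leq |x|/2$, $|y-x|\leq |x|/2$, and $|y|\geq 2|x|$) shows $v(x) \gtrsim |x|^{2-n}\int_{|x|/2}^{|x|} r^{n-1-p\mu}\,dr$ together with the far-field contribution. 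When $p\mu \leq n-2$, i.e.\ exactly in the range forced by $p \leq \frac{n}{n-2}$ starting from $\mu=n-2$ (where $p\mu = p(n-2) \leq n$, and more carefully the relevant borderline), the integral $\int^{\infty} r^{n-1-2p\mu}r^{?}$ driving the $u$-equation fails to converge, OR the improved decay exponent strictly decreases at each step. Concretely: plugging the bound on $v$ back into the $u$-equation replaces $\mu$ by a new exponent $\mu'$ with $\mu' < \mu$ (a fixed gain, since $p>1$), so after finitely many iterations $u(x) \gtrsim |x|^{-\mu_k}$ with $\mu_k$ so small that $\int_{R^n} u^p(y)\,dy = \infty$ or even $\int_{R^n}|x-y|^{2-n}u^p(y)\,dy = \infty$, contradicting the finiteness of $v(x)$. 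The clean way to package this is to show directly that the integral defining $v$ (or the one defining $u$) cannot converge: from $u \gtrsim |x|^{2-n}$ one gets $u^p \gtrsim |x|^{-p(n-2)}$, and $\int_{R^n}|x-y|^{2-n}|x|^{-p(n-2)}\,dx$ diverges at infinity precisely when $p(n-2) \leq n$, i.e.\ $p \leq \frac{n}{n-2}$; a more careful version, tracking the constant through one bootstrap step to handle the borderline $p = \frac{n}{n-2}$, closes that case too.

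The main obstacle is the endpoint $p = \frac{n}{n-2}$, where the naive divergence is only logarithmic and the single pointwise lower bound $u \gtrsim |x|^{2-n}$ is not quite enough; here one must either iterate the bootstrap a second time to pick up a logarithmic factor (showing $u(x) \gtrsim |x|^{2-n}\log|x|$ for $|x|$ large, which then makes the $v$-integral genuinely divergent), or invoke an integrability-type argument à la \cite{CLO}/\cite{SIAM} using the Hardy--Littlewood--Sobolev inequality recalled in the introduction to rule out finite-energy behavior and combine it with the decay estimate. I expect the rest — the Newton-potential integral estimates and the arithmetic of the decay exponents — to be routine, so the write-up should concentrate on making the borderline bootstrap rigorous.
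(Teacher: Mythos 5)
Your strategy for the strictly subcritical range $p<\frac{n}{n-2}$ is essentially the paper's: seed with $u,v\gtrsim |x|^{2-n}$ and iterate lower bounds through the two equations. But your justification of the iteration is off at the decisive point. One pass through the system sends an exponent $\mu$ to $\mu'=(2p-1)\mu-4$ (first $v\gtrsim|x|^{-(p\mu-2)}$, then $u\gtrsim|x|^{-((2p-1)\mu-4)}$), so the gain per step is $4-(2p-2)\mu$, which is \emph{not} ``a fixed gain, since $p>1$'': it is positive starting from $\mu=n-2$ precisely because $p<\frac{n}{n-2}$, and it vanishes identically at $p=\frac{n}{n-2}$, where $n-2=\frac{2}{p-1}$ is the fixed point of the recursion. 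So your bootstrap, as described, gives nothing at the endpoint, which is exactly the case your proposal fails to close. Your two suggested repairs do not work. First, the ``clean'' divergence criterion is miscomputed: from $u\gtrsim|y|^{2-n}$ the integral $\int_{|y|\ge1}|x-y|^{2-n}|y|^{-p(n-2)}dy$ diverges only when $p(n-2)\le 2$ (you dropped the kernel's far-field decay; what diverges for $p(n-2)\le n$ is $\int u^p\,dy$, whose divergence is not in contradiction with finiteness of $v(x)$, since $v$ is defined by the kernel-weighted integral). Second, the logarithmic bootstrap at $p=\frac{n}{n-2}$ gives $u\gtrsim|x|^{2-n}\log^k|x|$ for each $k$, but no power of $\log$ makes $\int|x-y|^{2-n}u^p\,dy$ divergent: with $u^p\sim|y|^{-n}\log^p|y|$ the tail integrand is $r^{1-n}\log^p r$, which is integrable for $n\ge3$; to force divergence you would need $u\gtrsim|y|^{-2/p}$, far beyond any logarithmic improvement. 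The vague fallback via Hardy--Littlewood--Sobolev ``finite-energy'' arguments also does not apply, since no integrability of the solution is assumed in this theorem.

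What is actually needed at the borderline is a different mechanism, and this is where the paper's proof departs from yours: using $p(n-2)=n$ one shows that the $R$-dependence cancels in the nested estimates, yielding $\int_{B_R(0)}u^{p-1}v\,dx\ge c\bigl(\int_{B_R(0)}u^{p-1}v\,dy\bigr)^p$ with $c$ independent of $R$; since $p>1$ this forces $u^{p-1}v\in L^1(R^n)$, and then repeating the estimate with the left-hand integral restricted to the annulus $B_R(0)\setminus B_{R/2}(0)$ and letting $R\to\infty$ gives $\|u^{p-1}v\|_{L^1(R^n)}=0$, a contradiction. Your proposal is missing this (or any workable) idea for $p=\frac{n}{n-2}$, so as written it only proves the statement for $p<\frac{n}{n-2}$, and even there the monotonicity of the exponent sequence needs the correct recursion $a_{j+1}=(2p-1)a_j-4$, $a_j=(n-2-\tfrac{2}{p-1})(2p-1)^j+\tfrac{2}{p-1}\to-\infty$, rather than a ``fixed gain'' argument.
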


\begin{proof}
The idea in \cite{SIAM} can be used here.

If $u,v$ are positive solutions, we can deduce a contradiction.

(1) When $p<\frac{n}{n-2}$, it follows $n-2-\frac{2}{p-1}<0$.

Set $a_0=n-2$, $a_{j+1}=(2p-1)a_j-4$, $b_j=pa_j-2$ for $j=0,1,2,\cdots$.

By (\ref{IE}), we have
\begin{equation} \label{aq}
u(x) \geq c\int_{B_1(0)}\frac{u^{p-1}(y)v(y)dy}{|x-y|^{n-2}}
\geq \frac{c}{|x|^{a_0}},
\end{equation}
$$
v(x) \geq c\int_{B(x,|x|/2)}\frac{u^p(y)dy}{|x-y|^{n-2}}
\geq \frac{c}{|x|^{b_0}},
$$
$$
u(x) \geq c\int_{B(x,|x|/2)}\frac{u^{p-1}(y)v(y)dy}{|x-y|^{n-2}}
\geq \frac{c}{|x|^{a_1}},
$$
$$
\cdots \quad \cdots
$$
By induction, we have $u(x) \geq \frac{c}{|x|^{a_j}}$ as long as
$a_j>0$. Noting $p>1$ and $n-2-\frac{2}{p-1}<0$, from
$$\begin{array}{ll}
a_j&=a_0(2p-1)^j-4[(2p-1)^{j-1}+(2p-1)^{j-2}+\cdots+(2p-1)^0]\\[3mm]
&=(n-2-\frac{2}{p-1})(2p-1)^j+\frac{2}{p-1},
\end{array}
$$
we can find some $j_0>0$ such that $a_{j_0} \leq 0$. This leads to
$u(x)=\infty$ and yields a contradiction.

(2) When $p=\frac{n}{n-2}$, from
$$
u(x) \geq c\int_{B_R(0)}\frac{u^{p-1}(y)v(y)dy}{|x-y|^{n-2}}
\geq \frac{c}{(R+|x|)^{n-2}}\int_{B_R(0)}u^{p-1}(y)v(y)dy,
$$
we deduce
\begin{equation} \label{iin}
\begin{array}{ll}
\displaystyle\int_{B_R(0)}u^p(x)dx
&\geq \displaystyle\int_{B_R(0)}\frac{cdx}{(R+|x|)^n}
(\int_{B_R(0)}u^{p-1}(y)v(y)dy)^p\\[3mm]
&\geq c(\displaystyle\int_{B_R(0)}u^{p-1}(y)v(y)dy)^p.
\end{array}
\end{equation}
Here $c$ is independent of $R$. Similarly, from
\begin{equation} \label{quaf}
v(x) \geq \frac{c}{(R+|x|)^{n-2}}\int_{B_R(0)}u^p(y)dy,
\end{equation}
we also deduce
$$
\int_{B_R(0)}u^{p-1}(x)v(x)dx
\geq \int_{B_R(0)}\frac{cu^{p-1}(x)dx}{(R+|x|)^{n-2}}\int_{B_R(0)}u^p(y)dy.
$$
Using (\ref{aq}), (\ref{iin}) and noting $p=\frac{n}{n-2}$, we get
$$
\int_{B_R(0)}u^{p-1}(x)v(x)dx
\geq c(\int_{B_R(0)}u^{p-1}(y)v(y)dy)^p,
$$
which implies $u^{p-1}v \in L^1(R^n)$ if we let $R \to \infty$.

Multiplying (\ref{quaf}) by $u^{p-1}$ and integrating on $A_R:=B_R(0)
\setminus B_{R/2}(0)$, we still have
$$
\int_{A_R}u^{p-1}(x)v(x)dx
\geq c(\int_{B_R(0)}u^{p-1}(y)v(y)dy)^p.
$$
Letting $R \to \infty$ and noting $u^{p-1}v \in L^1(R^n)$,
we obtain $\|u^{p-1}v\|_{L^1(R^n)}=0$. It is impossible.
\end{proof}

By using the method of moving planes in integral forms,
which was established by Chen-Li-Ou (cf. \cite{CLO} and \cite{CLO4}),
we prove a radial symmetry result.

\begin{theorem} \label{prop2.1}
Let $p \geq 2$. If $h \geq 0$ satisfies
\begin{equation} \label{hhh}
\int_{R^n}|y|^{-nh/2}u^{n(p-1)/2}(y)dy<\infty, \quad
\int_{R^n}|y|^{-nh/2}v^{n(p-1)/2}(y)dy<\infty.
\end{equation}
Then the positive continuous solutions of
\begin{equation} \label{wIE}
\left \{
   \begin{array}{l}
      u(x)=c_1\displaystyle\int_{R^n}\frac{u^{p-1}(y)v(y)dy}{|y|^h|x-y|^{n-2}}, \quad u>0 \quad in \quad R^n,\\
      v(x)=c_2\displaystyle\int_{R^n}\frac{u^p(y)dy}{|y|^h|x-y|^{n-2}} \quad v>0 \quad in \quad R^n,
   \end{array}
   \right.
\end{equation}
are radially symmetric and decreasing about $x^* \in R^n$. Moreover, $x^*=0$ as long as $h>0$.
\end{theorem}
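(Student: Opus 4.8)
The plan is to apply the method of moving planes in integral forms (the Chen--Li--Ou machinery) directly to the weighted system (\ref{wIE}). First I would fix a direction, say $x_1$, and for $\lambda \in \mathbb{R}$ set $\Sigma_\lambda = \{x : x_1 < \lambda\}$, let $x^\lambda = (2\lambda - x_1, x_2, \dots, x_n)$ be the reflection of $x$, and write $u_\lambda(x) = u(x^\lambda)$, $v_\lambda(x) = v(x^\lambda)$. Using the standard comparison of kernels $|x-y|^{-(n-2)} > |x^\lambda - y|^{-(n-2)}$ for $x, y \in \Sigma_\lambda$ (with the inequality reversed after reflecting one variable), I would derive the integral inequalities on $\Sigma_\lambda$
\begin{equation} \label{eq:mp1}
0 \le u(x) - u_\lambda(x) \le c_1 \int_{\Sigma_\lambda} \left( \frac{1}{|x-y|^{n-2}} - \frac{1}{|x^\lambda - y|^{n-2}} \right) \big( |y|^{-h} u^{p-1}(y) v(y) - |y^\lambda|^{-h} u_\lambda^{p-1}(y) v_\lambda(y) \big)\, dy,
\end{equation}
together with the analogous inequality for $v - v_\lambda$. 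The weight $|y|^{-h}$ complicates the usual estimate because $|y^\lambda| \ge |y|$ for $\lambda \le 0$ (equivalently $|y^\lambda| \le |y|$ for $\lambda \ge 0$ once one also reflects), so one has to be careful to keep the sign of the weight difference favorable; this is exactly why the conclusion $x^* = 0$ appears only when $h > 0$, and it will be the main obstacle to handle cleanly.

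The estimate step proceeds in the usual two stages. In the first stage I would show that for $\lambda$ sufficiently negative the sets $\Sigma_\lambda^u := \{x \in \Sigma_\lambda : u(x) > u_\lambda(x)\}$ and $\Sigma_\lambda^v$ are empty. To do this, on $\Sigma_\lambda^u$ one bounds the right side of (\ref{eq:mp1}) using $|y|^{-h} u^{p-1} v - |y^\lambda|^{-h} u_\lambda^{p-1} v_\lambda \le |y^\lambda|^{-h}\big( u^{p-1} v - u_\lambda^{p-1} v_\lambda \big)$ (valid once the weight difference has the right sign), then the elementary inequality $u^{p-1} v - u_\lambda^{p-1} v_\lambda \le (p-1) u^{p-2}(u - u_\lambda) v + u_\lambda^{p-1}(v - v_\lambda)$ on the set where the differences are positive, followed by the Hardy--Littlewood--Sobolev inequality (in the $Tg$ form quoted in the introduction, with $\alpha = 2$) and Hölder's inequality. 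This yields
\begin{equation} \label{eq:mp2}
\| u - u_\lambda \|_{L^{n(p-1)/2}(\Sigma_\lambda^u)} \le C \Big( \big\| |y|^{-nh/2} u^{n(p-1)/2} \big\|_{L^1(\Sigma_\lambda^u)}^{2/n} + \cdots \Big) \big( \| u - u_\lambda \|_{L^{n(p-1)/2}(\Sigma_\lambda^u)} + \| v - v_\lambda \|_{L^{n(p-1)/2}(\Sigma_\lambda^v)} \big),
\end{equation}
and by the integrability hypothesis (\ref{hhh}) the $L^1$ tails over $\Sigma_\lambda^u$, $\Sigma_\lambda^v$ are smaller than $1/(2C)$ once $\lambda \ll 0$; absorbing the right-hand norms forces $\Sigma_\lambda^u = \Sigma_\lambda^v = \emptyset$ (up to measure zero). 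The hypothesis $p \ge 2$ is what makes $u^{p-2}$ a genuine power with a nonnegative exponent so that these Hölder splittings are legitimate.

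In the second stage I would define $\lambda_0 = \sup\{\lambda : u_\mu \ge u$ and $v_\mu \ge v$ on $\Sigma_\mu$ for all $\mu \le \lambda\}$ and show that either $\lambda_0 = +\infty$ (which, combined with the analogous argument from the $+\infty$ side, is impossible for positive solutions unless one reaches a finite stopping plane) or else at $\lambda = \lambda_0$ one has $u \equiv u_{\lambda_0}$ and $v \equiv v_{\lambda_0}$: if not, a strict inequality propagates and the same HLS/Hölder estimate at $\lambda_0 + \varepsilon$ (using that the $L^1$ weighted tails over the relevant small sets go to zero as $\varepsilon \to 0$, by absolute continuity of the integrals in (\ref{hhh})) contradicts the maximality of $\lambda_0$. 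Since the direction $x_1$ was arbitrary, $u$ and $v$ are symmetric about a common hyperplane in every direction, hence radially symmetric and decreasing about a common point $x^*$. Finally, when $h > 0$ the weight $|y|^{-h}$ is itself only symmetric about the origin, so the reflected system (\ref{wIE}) is invariant under the reflection in direction $x_1$ across the plane $x_1 = \lambda$ only when that plane passes through $0$; running the argument shows the stopping plane in each direction must be $\{x_1 = 0\}$, forcing $x^* = 0$. I expect the bookkeeping around the sign of the weight difference $|y|^{-h} - |y^\lambda|^{-h}$ in inequality (\ref{eq:mp1}) — and making sure it cooperates rather than fights the kernel difference — to be the one genuinely delicate point; everything else is the by-now-standard CLO scheme.
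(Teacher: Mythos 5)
Your proposal is correct and follows essentially the same route as the paper's proof: the Chen--Li--Ou moving planes in integral forms, with the same decomposition isolating the weight-difference term (whose sign is favorable for $\lambda\le 0$), the HLS plus H\"older estimate whose coefficients are made small via the weighted integrability hypothesis (\ref{hhh}) so the bad sets are empty for $\lambda\ll 0$, the standard continuation of the plane, and the observation that for $h>0$ the strictly signed weight-difference term forbids a stopping plane off the origin, forcing $x^*=0$. The only differences are the mirror-image convention for $\Sigma_\lambda$ and minor bookkeeping in the MVT/H\"older splitting, which is at the same level of detail as the paper itself.
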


\begin{proof}
For some real number $\lambda$, define
$\Sigma_\lambda:=\{x=(x_1,\ldots,x_n);x_1>\lambda\}$,
$x^\lambda=(2\lambda-x_1,x_2,\ldots,x_n)$,
$u_\lambda(x)=u(x^\lambda)$, $\Sigma_\lambda^u=\{x \in \Sigma_\lambda|u(x) \leq u_\lambda(x)\}$,
$\Sigma_\lambda^v=\{x \in \Sigma_\lambda|v(x) \leq
v_\lambda(x)\}$.
It is not difficult to see that
\begin{equation} \label{ggg}
\begin{array}{ll}
&\quad u_\lambda(x)-u(x)\\[3mm]
&= c_1\displaystyle\int_{\Sigma_\lambda} \left
      (\frac{1}{|x-y|^{n-2}}-\frac{1}{|x^{\lambda}-y|^{n-2}}\right)
      \frac{1}{|y^{\lambda}|^h}(v_{\lambda}u_{\lambda}^{p-1}-vu^{p-1})dy\\[3mm]
 & -  c_1\displaystyle\int_{\Sigma_\lambda}
      \left(\frac{1}{|x-y|^{n-2}}-\frac{1}{|x^\lambda-y|^{n-2}}\right)
      \left(\frac{1}{|y|^h}-\frac{1}{|y^\lambda|^h}\right)vu^{p-1}dy.
\end{array}
\end{equation}
Since the second term of the right hand side is nonpositive, from
the definition of $\Sigma_\lambda^{u}$ and $\Sigma_\lambda^{v}$, it follows
$$\begin{array}{ll}
&\quad  u_\lambda(x)-u(x)\\[3mm]
      &\leq c\displaystyle\int_{\Sigma_\lambda} \left
      (\frac{1}{|x-y|^{n-2}}-\frac{1}{|x^{\lambda}-y|^{n-2}}\right)
      \frac{1}{|y^{\lambda}|^h}(v_{\lambda}u_{\lambda}^{p-1}-vu^{p-1})dy\\[3mm]
      &\leq c\displaystyle\int_{\Sigma_\lambda^{v}}
    \frac{1}{|x-y|^{n-2}}\frac{1}{|y^\lambda|^h}u_{\lambda}^{p-1}(v_\lambda-v)(y)dy\\[3mm]
    &\quad+c\displaystyle\int_{\Sigma_\lambda^{u}} \frac{1}{|x-y|^{n-2}}\frac{1}{|y^\lambda|^h} v(u_\lambda^{p-1}-
     u^{p-1})(y)dy.
\end{array}
$$
Using the Hardy-Littlewood-Sobolev inequality and the H\"older inequality, we have
$$\begin{array}{ll}
&\quad \|u_\lambda-u\|_{L^s(\Sigma _\lambda^{u})} \\[3mm]
&\leq C\||y|^{-h}u_\lambda^{p-1}(v_\lambda-v)\|_{L^{\frac{ns}{n+2s}}(\Sigma _\lambda^{v})}
+C\||y|^{-h}vu_\lambda^{p-2}(u_\lambda-u)\|_{L^{\frac{ns}{n+2s}}(\Sigma _\lambda^{u})}\\[3mm]
& \leq C\||y|^{-h}u_\lambda^{p-1}\|_{L^{\frac{n}{2}}(\Sigma _\lambda^{v})}
\|(v_\lambda-v)\|_{L^s(\Sigma _\lambda^{v})}\\[3mm]
&\quad +C\||y|^{-h}u_\lambda^{p-2}v\|_{L^{\frac{n}{2}}(\Sigma _\lambda^{u})}
\|(u_\lambda-u)\|_{L^s(\Sigma _\lambda^{u})}.
\end{array}
$$
Similarly, we also obtain
$$
\|v_\lambda-v\|_{L^s(\Sigma _\lambda^{v})}
\leq C\||y|^{-h}u_\lambda^{p-1}\|_{L^{\frac{n}{2}}(\Sigma _\lambda^{u})}
\|(u_\lambda-u)\|_{L^s(\Sigma _\lambda^{u})}.
$$
By (\ref{hhh}), as $\lambda \to -\infty$,
$$
C\||y|^{-h}u_\lambda^{p-1}\|_{L^{\frac{n}{2}}(\Sigma _\lambda^{v})} \leq \frac{1}{4},
\quad \||y|^{-h}u_\lambda^{p-2}v\|_{L^{\frac{n}{2}}(\Sigma _\lambda^{u})}\leq\frac{1}{4}.
$$
Combining these results, we can see that $\Sigma _\lambda^{u}$ and $\Sigma _\lambda^{v}$
are empty set as long as $\lambda$ is near $-\infty$.

Suppose that at $\lambda_0<0$, we have $u(x) \geq u_{\lambda_0}(x)$ and $v(x) \geq v_{\lambda_0}(x)$
but $u(x) \not\equiv u_{\lambda_0}(x)$ and $v(x) \not\equiv v_{\lambda_0}(x)$ on $\Sigma_{\lambda_0}$.
By the same argument above, we can prove that there exists an $\epsilon>0$,
such that $u(x) \geq  u_\lambda(x)$ and $v(x) \geq v_{\lambda}(x)$ on $\Sigma_\lambda$ for
all $\lambda \in [\lambda_0,\lambda_0+\epsilon)$. Therefore, we can move plane $x_1=\lambda$ to the right as long as
$$
u(x) \geq u_\lambda(x) \quad \textrm{and} \quad v(x) \geq v_\lambda(x)
$$
hold on $\Sigma_\lambda$.
If the plane stops at $x_1=\lambda_0$ for some $\lambda_0 < 0$, then $u(x)$ and $v(x)$ must be
radially symmetric and decreasing about the plane $x_1=\lambda_0$.
Otherwise, we can move the plane all the way to $x_1=0$.
Since the direction of $x_1$ can be chosen arbitrarily, we obtain that
$u(x),v(x)$ are radially symmetric and decreasing about some $x^* \in R^n$.

If $h\neq 0$, we claim $x^*=0$. Otherwise, we can find
$\lambda_0<0$ such that $x_1=\lambda_0$ is the stopped plane.
From (\ref{ggg}), we get
$$\begin{array}{ll}
0&=u_{\lambda_0}(x)-u(x)\\[3mm]
&= -  c_1\displaystyle\int_{\Sigma_{\lambda_0}}
      \left(\frac{1}{|x-y|^{n-2}}-\frac{1}{|x^{\lambda_0}-y|^{n-2}}\right)
      \left(\frac{1}{|y|^h}-\frac{1}{|y^{\lambda_0}|^h}\right)vu^{p-1}dy \\[3mm]
&\neq 0.
      \end{array}
$$
It is impossible.
\end{proof}

\begin{theorem} \label{th2.2}
If $2 \leq p <2^*-1$, then there does not exist any positive solution of (\ref{IE})
in $L^{n(p-1)/2}(R^n)$.
\end{theorem}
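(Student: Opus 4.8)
The plan is to play the translation invariance of $(\ref{IE})$ against the \emph{lack} of translation invariance of its Kelvin transform, using Theorem \ref{prop2.1} twice: once for the unweighted system and once for the weighted one. First, a reduction: if $p\leq\frac n{n-2}$, then Theorem \ref{th2.1} already rules out \emph{any} positive solution of $(\ref{IE})$, so I may assume $\frac n{n-2}<p<2^*-1$ and $p\geq 2$. Given a positive solution $(u,v)$ of $(\ref{IE})$ with $u\in L^{n(p-1)/2}(R^n)$, I first promote this to the hypotheses needed for Theorem \ref{prop2.1}: applying the Hardy--Littlewood--Sobolev inequality to the second equation of $(\ref{IE})$ with $r=\frac{n(p-1)}{2p}$ (and here $r>1$ is exactly the assumption $p>\frac n{n-2}$) gives $v\in L^{n(p-1)/2}(R^n)$ as well, and a standard regularity bootstrap on the integral equations shows $u,v$ are positive and continuous. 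Thus $(\ref{hhh})$ holds with $h=0$, and Theorem \ref{prop2.1} applied to $(\ref{IE})$ (the $h=0$ case of $(\ref{wIE})$) gives that $u,v$ are radially symmetric and decreasing about some point $x^*\in R^n$. Since $(\ref{IE})$ is translation invariant, after replacing $(u,v)$ by a translate I may and do assume $x^*\neq 0$.

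The second step is the Kelvin transform centered at the origin: set $\bar u(x)=|x|^{2-n}u(x/|x|^2)$ and $\bar v(x)=|x|^{2-n}v(x/|x|^2)$, which are positive and continuous on $R^n\setminus\{0\}$. Using $\left|\frac{x}{|x|^2}-\frac{y}{|y|^2}\right|=\frac{|x-y|}{|x|\,|y|}$, the substitution $y\mapsto y/|y|^2$ (Jacobian $|y|^{-2n}$), and $u(y/|y|^2)=|y|^{n-2}\bar u(y)$, $v(y/|y|^2)=|y|^{n-2}\bar v(y)$, one checks directly that $(\bar u,\bar v)$ solves the weighted integral system $(\ref{wIE})$ with
\[
h=n+2-(n-2)p,
\]
which is strictly positive precisely because $p<2^*-1=\frac{n+2}{n-2}$. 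The same substitution turns the finiteness condition $(\ref{hhh})$ for $(\bar u,\bar v)$ back into the finiteness of $\int_{R^n}u^{n(p-1)/2}(z)\,dz$, since the exponent of $|z|$ that appears after the change of variables collects to $0$ thanks to $\tfrac n2\bigl[(n+2-(n-2)p)+(n-2)(p-1)\bigr]-2n=0$; likewise for $\bar v$. Hence $(\ref{hhh})$ holds for $(\bar u,\bar v)$ with $h>0$.

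Now apply Theorem \ref{prop2.1} a second time, to $(\bar u,\bar v)$ with $h>0$: the theorem forces the center of symmetry to be the origin, so $\bar u$ is radially symmetric about $0$, and unwinding the Kelvin transform, $u$ is radially symmetric about $0$ as well. But $u$ was already radially symmetric about $x^*\neq 0$; a function on $R^n$ radially symmetric about two distinct points is constant, and a positive constant is not in $L^{n(p-1)/2}(R^n)$. This contradicts $u>0$, completing the proof.

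I expect the main obstacle to be the bookkeeping in the second step: verifying that the Kelvin transform carries $(\ref{IE})$ to $(\ref{wIE})$ with the precise weight $h=n+2-(n-2)p$, and — crucially — that the weighted finiteness hypothesis $(\ref{hhh})$ for the transformed pair is \emph{equivalent} to $u,v\in L^{n(p-1)/2}(R^n)$; it is this exact cancellation of exponents that makes Theorem \ref{prop2.1} applicable after the transform, and it is also why the argument works only in the subcritical range (in the critical case $h=0$, so $(\ref{wIE})$ is again translation invariant and no contradiction arises). A secondary point requiring care is the first step, namely that one must legitimately restrict to $p>\frac n{n-2}$, via Theorem \ref{th2.1}, in order to extract $v\in L^{n(p-1)/2}(R^n)$ and the continuity of $u,v$ from the integral equations alone.
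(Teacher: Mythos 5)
Your proof is correct and follows essentially the same route as the paper: Theorem \ref{prop2.1} with $h=0$ together with translation invariance, then the Kelvin transform carrying (\ref{IE}) to (\ref{wIE}) with $h=n+2-p(n-2)>0$, whose symmetry center is forced to be the origin. The only difference is that you spell out the final contradiction (two distinct centers of radial symmetry force constancy, impossible for a positive $L^{\frac{n(p-1)}{2}}(R^n)$ function) and the preliminary reduction to $p>\frac{n}{n-2}$ needed for (\ref{hhh}), details the paper delegates to the argument of Theorem 3 in \cite{CLO4}.
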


\begin{proof}
{\it Step 1.} Suppose $u,v$ are the $L^{\frac{n(p-1)}{2}}(R^n)$-solutions of (\ref{IE}).
According to Theorem \ref{prop2.1} with $h=0$, we see that they are radially symmetric
about $x^* \in R^n$. Since (\ref{IE}) is invariant after translation, $x^*$ can be chosen
arbitrarily.

{\it Step 2.}
Take the Kelvin transformation of $u,v$
$$
\tilde{u}(x)=\frac{1}{|x|^{n-2}}u(\frac{x}{|x|^2}),
\quad
\tilde{u}(x)=\frac{1}{|x|^{n-2}}u(\frac{x}{|x|^2}).
$$
By (\ref{IE}), we see that $\tilde{u},\tilde{v}$ solve (\ref{wIE}) with
$h=n+2-p(n-2)$. In view of $p<2^*-1$, it follows $h>0$. In addition, from
$u,v \in L^{\frac{n(p-1)}{2}}(R^n)$, we see that (\ref{hhh}) for $\tilde{u},\tilde{v}$ is true.
According to Theorem \ref{prop2.1}, $\tilde{u},\tilde{v}$ are also radially symmetric but
the center point $x^*$ must be the origin. So the translation invariant is
absent. By the same argument of Theorem 3 in \cite{CLO4}, we can also deduce
a contradiction.
\end{proof}

\section{Critical case}

Theorem \ref{th2.1} implies that
\begin{equation} \label{base}
p>\frac{n}{n-2}
\end{equation}
is the necessary condition of the existence of positive solutions
for (\ref{Sys}). Therefore, we can easily see that
$$
u(x)=v(x)=c(\frac{t}{t^2+|x-x^*|^2})^{\frac{n-2}{2}}
$$
is a positive solution of (\ref{Sys}) in $L^{n(p-1)/2}(R^n)$. However,
it is nontrivial to show that all solutions of (\ref{Sys}) in $L^{n(p-1)/2}(R^n)$
are the form above. In this section, we prove this conclusion.

\begin{theorem} \label{th3.1}
If $u \in L^{\frac{n(p-1)}{2}}(R^n)$ solves (\ref{IE}), then

(R1) $u,v \in L^s(R^n)$ for all $s>\frac{n}{n-2}$;
$u,v \not\in L^s(R^n)$ for all $s\leq \frac{n}{n-2}$;

(R2) $u,v$ are bounded and
\begin{equation} \label{boundary}
\lim_{|x| \to \infty}u(x)=\lim_{|x| \to \infty}v(x)=0;
\end{equation}

(R3) $u,v$ are radially symmetric and decreasing about some point $x^* \in R^n$.
\end{theorem}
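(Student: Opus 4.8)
\medskip
\noindent\textbf{Proof strategy.} I would prove the three assertions in the order (R1), (R2), (R3), so that the integrability obtained in (R1) is available both for the potential estimates in (R2) and for the moving-plane hypothesis behind (R3); note first that, since (\ref{IE}) is assumed to carry a positive solution, Theorem \ref{th2.1} forces $p>\frac{n}{n-2}$. Two quick remarks set the stage. Applying the Hardy-Littlewood-Sobolev inequality (the case $\alpha=2$) to the second equation of (\ref{IE}): from $u\in L^{\frac{n(p-1)}{2}}$ we have $u^p\in L^{\frac{n(p-1)}{2p}}$, the source exponent $\frac{n(p-1)}{2p}$ lies in $(1,\frac n2)$ exactly because $p>\frac{n}{n-2}$, and since $\frac{2p}{n(p-1)}-\frac2n=\frac{2}{n(p-1)}$ the inequality yields $v\in L^{\frac{n(p-1)}{2}}(R^n)$ as well. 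Also, keeping only a fixed ball in (\ref{IE}) gives, for $|x|\ge1$,
$$
u(x)\ \ge\ c_1\!\int_{B_1(0)}\frac{u^{p-1}(y)v(y)\,dy}{|x-y|^{n-2}}\ \ge\ \frac{c}{|x|^{n-2}},
\qquad
v(x)\ \ge\ c_2\!\int_{B_1(0)}\frac{u^{p}(y)\,dy}{|x-y|^{n-2}}\ \ge\ \frac{c}{|x|^{n-2}}
$$
with $c>0$ (here $u,v>0$); integrating $|x|^{(2-n)s}$ over $\{|x|>1\}$ then gives the negative half of (R1), that $u,v\notin L^s(R^n)$ for $s\le\frac{n}{n-2}$.

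For the positive half of (R1) the plan is a bootstrap. Feeding $u,v\in L^s$ back into (\ref{IE}) through Hardy-Littlewood-Sobolev yields $u,v\in L^r$ with $\frac1r=\frac ps-\frac2n$ whenever this is positive, but at the starting exponent $s=\frac{n(p-1)}{2}$ the assignment has a fixed point — $L^{n(p-1)/2}$ is precisely scaling critical for (\ref{Sys}) — so a single application does not help. To get past this I would use the regularity lifting lemma by contracting operators (cf.\ \cite{JL}, \cite{CLO}): writing $u=u\chi_{\{u>M\}}+u\chi_{\{u\le M\}}$, the first summand has arbitrarily small $L^{n(p-1)/2}$-norm for $M$ large, so freezing it makes the resulting linear operator a contraction on $L^s$ for $s$ in an open interval containing $\frac{n(p-1)}{2}$; this moves the integrability strictly off the critical exponent, after which the iteration $\frac1{s_{k+1}}=\frac p{s_k}-\frac2n$ strictly decreases $\frac1{s_k}$ and passes $\frac2n$ in finitely many steps, whereupon the Newton potential of $u^{p-1}v$ (resp.\ of $u^p$) is bounded. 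Combining the interval from the lemma with this iteration and interpolating yields $u,v\in L^s(R^n)$ for all $s>\frac{n}{n-2}$, which completes (R1). \emph{This step, getting strictly past the critical exponent, is the main obstacle; everything that follows is routine.}

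Granting (R1), both $u^{p-1}v$ and $u^p$ lie in $L^{q_1}\cap L^{q_2}$ for some $q_1<\frac n2<q_2$ (take $u,v\in L^s$ with $s$ slightly above $\frac{n}{n-2}$, using $p>\frac{n}{n-2}$ to land $q_1=s/p$ below $\frac n2$, and separately with $s$ large for $q_2$); splitting the kernel $|x-y|^{2-n}$ over $|x-y|<1$ and $|x-y|\ge1$ and applying H\"older with the $L^{q_2}$ norm on the first piece and the $L^{q_1}$ norm on the second gives $u,v\in L^\infty(R^n)$. For (\ref{boundary}) I would split $R^n=\{|x-y|\ge|x|/2\}\cup\{|x-y|<|x|/2\}$: on the first set the integrand is $\le(|x|/2)^{2-n}u^{p-1}v$ and contributes $\le(|x|/2)^{2-n}\|u^{p-1}v\|_{L^1}\to0$ (note $u^{p-1}v,u^p\in L^1$ by (R1)); on the second set $|y|>|x|/2$, and a further split into $|x-y|<1$, bounded by $\||z|^{2-n}\|_{L^{q_2'}(|z|<1)}\,\|u^{p-1}v\|_{L^{q_2}(\{|y|>|x|/2\})}\to0$, and $1\le|x-y|<|x|/2$, bounded by $\|u^{p-1}v\|_{L^1(\{|y|>|x|/2\})}\to0$, gives $u(x)\to0$ as $|x|\to\infty$; likewise $v(x)\to0$. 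This establishes (R2).

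Finally, the integrability $u,v\in L^{n(p-1)/2}(R^n)$ recorded above is exactly hypothesis (\ref{hhh}) of Theorem \ref{prop2.1} with $h=0$, so that theorem yields (R3): $u,v$ are radially symmetric and decreasing about a common point $x^*\in R^n$. As a byproduct one then gets $u(r)\le Cr^{-n/s}$ for every $s>\frac{n}{n-2}$, hence $u,v\lesssim|x|^{-(n-2)+\varepsilon}$; one more substitution into (\ref{IE}), using $p(n-2)>n$, upgrades this to the sharp pointwise rate $|x|^{2-n}$ (cf.\ \cite{LLM}) that Theorem \ref{th1.2} will require, though that refinement is not needed for the present statement.
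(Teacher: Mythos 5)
Your plan is correct and its core is the same as the paper's: the negative half of (R1) via the lower bound $u,v\gtrsim|x|^{2-n}$ from (\ref{aq}), the positive half via a small-norm truncation of the solution plus the contraction/regularity-lifting lemma of \cite{JL}, boundedness via splitting the Newton kernel, and (R3) by invoking Theorem \ref{prop2.1} with $h=0$. Two sub-steps differ in execution. First, the paper truncates $w=u+v$ by height \emph{and} support ($w_A=w$ on $\{|x|>A\}\cup\{w>A\}$), so that $w-w_A$ is bounded with compact support and the frozen operator is a contraction on every $L^s$, $s>\frac{n}{n-2}$, simultaneously; the lifting lemma then gives the whole range at once, with no need for your subsequent bootstrap iteration $\frac1{s_{k+1}}=\frac p{s_k}-\frac2n$. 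Your height-only cut works, but then be careful at the end: interpolation between $L^{n(p-1)/2}$ and $L^\infty$ only covers $s\ge\frac{n(p-1)}2$, and to reach $s\in(\frac n{n-2},\frac{n(p-1)}2)$ you need one more pass through (\ref{IE}) using $u^{p-1}v,\,u^p\in L^1\cap L^\infty$ (which you do have at that stage), not interpolation alone. Second, for the decay (\ref{boundary}) the paper uses an averaging device — it shows $w(x_0)\le C\varepsilon+Cw(x)$ for $|x-x_0|<\delta$ and then exploits $\|w\|_{L^s(B_\delta(x_0))}\to0$ — whereas you estimate the potential directly by splitting into $\{|x-y|\ge|x|/2\}$ and $\{|x-y|<|x|/2\}$ and using the vanishing of tail norms of $u^{p-1}v$ and $u^p$; your version is more elementary and equally valid given (R1) and boundedness. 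One shared caveat: like the paper, you quote Theorem \ref{prop2.1}, whose statement carries the hypothesis $p\ge2$, without comment; what is really used is its proof with $h=0$, where the estimate of $u_\lambda^{p-1}-u^{p-1}$ relies on that assumption, so the appeal is no weaker, but also no stronger, than the paper's own.
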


\begin{proof}

(1) By the Hardy-Littlewood-Sobolev inequality, from $u \in L^{\frac{n(p-1)}{2}}(R^n)$
we can deduce $v \in L^{\frac{n(p-1)}{2}}(R^n)$.

Write $w=u+v$. Then $w \in L^{\frac{n(p-1)}{2}}(R^n)$.
From (\ref{IE}), it follows that $w$ satisfies
\begin{equation} \label{W}
w(x)=K(x)\int_{R^n}\frac{w^p(y)dy}{|x-y|^{n-2}}.
\end{equation}
Here $K(x)>0$ is upper bounded. Set $w_A=w$ as $|x|>A$ or $w>A$;
$w=0$ as $|x| \leq A$ and $w \leq A$.
For $f \in L^s(R^n)$ with $s>\frac{n}{n-2}$, define
$$\begin{array}{ll}
&Tf(x):=K(x)\displaystyle\int_{R^n}\frac{w_A^{p-1}(y)f(y)dy}{|x-y|^{n-2}},\\[3mm]
&F(x):=K(x)\displaystyle\int_{R^n}\frac{(w-w_A)^p(y)dy}{|x-y|^{n-2}}.
\end{array}
$$
Therefore, $w$ solves the operator equation
$$
f=Tf+F.
$$
By the Hardy-Littlewood-Sobolev inequality, we get
$$
\|Tf\|_s \leq C\|w_A^{p-1}f\|_{\frac{ns}{n+2s}}
\leq C\|w_A\|_{\frac{n(p-1)}{2}}^{p-1}\|f\|_s,
$$
and
$$
\|F\|_s \leq C\|w-w_A\|_{\frac{nps}{n+2s}}^p<\infty.
$$
Take $A$ suitably large such that
$C\|w_A\|_{\frac{n(p-1)}{2}}^{p-1}<1$. Thus, $T$ is a contraction map
from $L^s(R^n)$ to itself. In view of $\frac{n(p-1)}{2}>\frac{n}{n-2}$
(which is implied by (\ref{base})), $T$ is also a contraction map
from $L^{n(p-1)/2}(R^n)$ to itself. By the lifting lemma on the regularity
(cf. Lemma 2.1 in \cite{JL}), we obtain $w \in L^s(R^n)$ for $s>\frac{n}{n-2}$.
Thus, $u,v \in L^s(R^n)$ for $s>\frac{n}{n-2}$.

On the other hand, if $s \leq \frac{n}{n-2}$, by (\ref{aq}) we have
$$
\|u\|_s \geq c\int_R^\infty r^{n-s(n-2)}\frac{dr}{r}=\infty.
$$
Similarly, we also deduce $v \not\in L^s(R^n)$ for all $s \leq \frac{n}{n-2}$.
(R1) is proved.

(2) For $r>0$, write
$$
w(x)=K(x)\int_{B_r(x)}\frac{w^p(y)dy}{|x-y|^{n-2}}
+K(x)\int_{R^n \setminus B_r(x)}\frac{w^p(y)dy}{|x-y|^{n-2}}
:=K_1+K_2.
$$
Clearly, for a suitably small $\epsilon>0$, from (R1) we deduce
$$
K_1 \leq C\|w\|_{1/\epsilon}^p (\int_0^d r^{n-\frac{n-2}{1-p\epsilon}}
\frac{dr}{r})^{1-p\epsilon} \leq C.
$$
On the other hand, by virtue of (\ref{base}) and (R1), we get $w \in L^p(R^n)$. Thus
$$
K_2 \leq Cr^{2-n}\|w\|_p^p \leq C.
$$
Combining the estimates of $K_1$ and $K_2$, we know that $w$ is
bounded. Thus, $u,v$ are bounded.

Next, we show that $w$ is decaying. Take $x_0 \in R^n$. By
exchanging the order of the integral variables, we have
$$
w(x_0) =(n-\alpha)K(x_0)\int_0^\infty (\frac{\int_{B_t(x_0)}
w^p(y)dy}{t^{n-\alpha}})\frac{dt}{t}.
$$
Since $w \in L^\infty(R^n)$, $\forall \varepsilon>0$, there exists
$\delta \in (0,1/2)$ such that
\begin{equation} \label{2.*3}
\int_0^{\delta} [\frac{\int_{B_t(x_0)}w^p(z)dz}{t^{n-\alpha}}]
\frac{dt}{t} \leq C\|w\|_\infty^p \int_0^{\delta} t^\alpha
\frac{dt}{t} <\varepsilon.
\end{equation}
As $|x-x_0|<\delta$,
$$
\int_{\delta}^{\infty} [\frac{\int_{B_t(x_0)}
w^p(z)dz}{t^{n-\alpha}}] \frac{dt}{t} \leq \int_{\delta}^{\infty}
[\frac{\int_{B_{t+\delta}(x)} w^p(z)dz}{(t+\delta)^{n-\alpha}}]
(\frac{t+\delta}{t})^{n-\alpha+1} \frac{d(t+\delta)}{t+\delta}\\[3mm]
\leq Cw(x).
$$
Combining this result with (\ref{2.*3}), we get
$$
w(x_0)<C\varepsilon+Cw(x), \quad for \quad |x-x_0| <\delta.
$$
Let $s=\frac{n(p-1)}{2}$, then
\begin{equation}
\begin{array}{ll}
w^{s}(x_0) &=|B_\delta(x_0)|^{-1}\displaystyle\int_{B_\delta(x_0)}
w^{s}(x_0)dx\\[3mm] &\leq
C\varepsilon^{s}
+C|B_\delta(x_0)|^{-1}\displaystyle\|w\|_{L^s(B_\delta(x_0))}^s.
\end{array}
\label{4.15}
\end{equation}
Since $w \in L^s(R^n)$, $\lim_{|x_0| \to
\infty}\|w\|_{L^s(B_\delta(x_0))}=0$. Inserting this result into
(\ref{4.15}), we have
$$
\lim_{|x_0| \to \infty}w^{s}(x_0)=0.
$$
This result means $u,v$ converge to zero when $|x| \to \infty$.
(R2) is proved.

(3) By the same proof of Theorem \ref{prop2.1} with $h=0$, we can
see the conclusion (R3).
\end{proof}

\begin{theorem} \label{th3.2}
If $u \in L^{\frac{n(p-1)}{2}}(R^n)$ solves (\ref{Sys}) in the classical sense,
then $u^pv \in L^1(R^n)$ and $u \in \mathcal{D}^{1,2}(R^n)$. Moreover,
\begin{equation} \label{yulian}
\int_{R^n}|\nabla u|^2dx=\int_{R^n}|\nabla v|^2dx=\sqrt{p}\int_{R^n}u^pvdx.
\end{equation}
\end{theorem}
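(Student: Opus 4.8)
The plan is to read the integrability claim straight off Theorem~\ref{th3.1} and then to get the finiteness of the Dirichlet energies together with the identity~(\ref{yulian}) from a single cut-off test-function computation applied to each equation of~(\ref{Sys}).

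First, since positive solutions force $p>\frac{n}{n-2}$ (Theorem~\ref{th2.1}, cf.~(\ref{base})), both $p$ and $2^*=\frac{2n}{n-2}$ exceed $\frac{n}{n-2}$, so Theorem~\ref{th3.1}(R1) gives $u,v\in L^p(R^n)\cap L^{2^*}(R^n)$; since $v$ is bounded (Theorem~\ref{th3.1}(R2)) this already yields $u^pv\le\|v\|_{L^\infty}u^p\in L^1(R^n)$. It then remains only to show $\nabla u,\nabla v\in L^2(R^n)$, for then $u,v\in\mathcal D^{1,2}(R^n)$ because $\mathcal D^{1,2}(R^n)=\{\,w\in L^{2^*}(R^n):\nabla w\in L^2(R^n)\,\}$ and $u,v\in L^{2^*}(R^n)$.

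For the gradient bound I would take a cut-off $\phi_R(x)=\phi(x/R)$ with $\phi\in C_c^\infty(R^n)$ radial, $\phi\equiv1$ on $B_1(0)$, $\mathrm{supp}\,\phi\subset B_2(0)$, $0\le\phi\le1$; since $u$ is classical, $u\phi_R^2$ is admissible and integration by parts in $-\Delta u=\sqrt p\,u^{p-1}v$ gives
\[
\int_{R^n}|\nabla u|^2\phi_R^2\,dx+2\int_{R^n}u\,\phi_R\,\nabla u\cdot\nabla\phi_R\,dx=\sqrt p\int_{R^n}u^pv\,\phi_R^2\,dx .
\]
The key point is that the cross term is, by Cauchy--Schwarz, at most $2a_R\varepsilon_R$, where $a_R:=(\int|\nabla u|^2\phi_R^2)^{1/2}$ and $\varepsilon_R^2:=\int u^2|\nabla\phi_R|^2\le CR^{-2}\int_{B_{2R}(0)\setminus B_R(0)}u^2$, and the latter is $\le C\|u\|_{L^{2^*}(B_{2R}(0)\setminus B_R(0))}^2\to0$ by Hölder's inequality and $u\in L^{2^*}(R^n)$. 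Since the right-hand side is $\le M:=\sqrt p\int_{R^n}u^pv<\infty$, the displayed identity forces $a_R^2\le M+2\varepsilon_R a_R$, hence $a_R$ is bounded uniformly in $R$; letting $R\to\infty$ by monotone convergence yields $\nabla u\in L^2(R^n)$ with $\int|\nabla u|^2\le M$. The same computation applied to $-\Delta v=\sqrt p\,u^p$ tested against $v\phi_R^2$ (using $v\in L^{2^*}(R^n)$) gives $\nabla v\in L^2(R^n)$. Once finiteness is known I would pass to the limit $R\to\infty$ in the two displayed identities: the leading terms tend to $\int|\nabla u|^2$ and $\int|\nabla v|^2$, the cross terms tend to $0$ (each is $O(\varepsilon_R a_R)$ with $a_R$ bounded and $\varepsilon_R\to0$), and the right-hand sides tend to $\sqrt p\int u^pv$ by dominated convergence ($u^pv\in L^1$), which yields~(\ref{yulian}).

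I expect the only genuine obstacle to be the justification of the integration by parts up to infinity, i.e. controlling the cross term; this is exactly where the boundedness, the decay~(\ref{boundary}) and the sharp integrability $u,v\in L^{2^*}(R^n)$ from Theorem~\ref{th3.1} are used. As an alternative route to $\nabla u\in L^2(R^n)$ one could instead differentiate~(\ref{IE}) to get the pointwise bound $|\nabla u(x)|\le c\int_{R^n}|x-y|^{1-n}u^{p-1}(y)v(y)\,dy$ and combine it with the Hardy--Littlewood--Sobolev inequality and the fact that $u^{p-1}v\in L^{2n/(n+2)}(R^n)$, which follows from Theorem~\ref{th3.1} since $p>\frac{n+2}{2(n-2)}$, a consequence of $p>\frac{n}{n-2}$.
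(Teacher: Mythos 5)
Your proposal is correct, and the energy-finiteness part is essentially the paper's Step 2: the same test function $u\zeta_R^2$ (your $u\phi_R^2$), the same Cauchy--Schwarz treatment of the cross term, and the same use of $u,v\in L^{2^*}(R^n)$ from Theorem \ref{th3.1}(R1); likewise your derivation of $u^pv\in L^1(R^n)$ from (R1) plus boundedness of $v$ is a harmless variant of the paper's H\"older argument with (\ref{base}). Where you genuinely diverge is in how the identity (\ref{yulian}) is obtained. The paper only proves the analogue of your $\varepsilon_R$ to be \emph{bounded} (its estimate (\ref{LY3}), via $\int_D|\nabla\zeta_R|^n\,dx\le C$), so it cannot pass to the limit in the cut-off identity; instead it re-tests the equations with $u$ and $v$ on balls $B_{R_j}$, producing boundary terms $\int_{\partial B_{R_j}}u\,\partial_\nu u\,ds$, and kills them along a carefully chosen sequence $R_j$ for which (\ref{conv}) holds. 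You avoid this entirely by sharpening the estimate: restricting to the annulus where $\nabla\phi_R\neq0$ and using the vanishing tail of $\|u\|_{L^{2^*}}$ gives $\varepsilon_R\to0$, so the cross term dies and (\ref{yulian}) follows directly from the cut-off identity by dominated convergence, with no subsequence and no boundary integrals. This is a cleaner route to Theorem \ref{th3.2} itself; the only thing it does not deliver is the auxiliary statement (\ref{conv}), which the paper establishes here and then reuses in the proofs of Theorems \ref{th3.3} and \ref{th3.4}, so if you adopt your argument you would need to extract (\ref{conv}) (a one-line consequence of the finiteness of $\int(u^{2^*}+v^{2^*}+u^pv+|\nabla u|^2+|\nabla v|^2)\,dx$) separately for later use. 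Your closing aside (differentiating (\ref{IE}) and applying Hardy--Littlewood--Sobolev to $u^{p-1}v\in L^{2n/(n+2)}$) is also viable but relies on the passage from (\ref{Sys}) to (\ref{IE}) and differentiation under the integral, so the cut-off argument is the more self-contained choice.
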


\begin{proof}
{\it Step 1.}
By the H\"older inequality, from Theorem \ref{th3.1} (R1) and (\ref{base}), we can
deduce that $u^pv \in L^1(R^n)$.

{\it Step 2.} Take smooth function $\zeta(x)$ satisfying
$$
\left \{
   \begin{array}{lll}
&\zeta(x)=1, \quad &for~ |x| \leq 1;\\
&\zeta(x) \in [0,1], \quad &for~ |x| \in [1,2];\\
&\zeta(x)=0, \quad &for~ |x| \geq 2.
   \end{array}
   \right.
$$
Define the cut-off function
\begin{equation} \label{cut}
\zeta_R(x)=\zeta(\frac{x}{R}).
\end{equation}

Multiplying the first equation of (\ref{Sys}) by $u\zeta_R^{2}$ and integrating
on $D:=B_{3R}(0)$, we have
$$
-\int_D\zeta_R^{2}u\Delta udx=\sqrt{p}\int_D u^pv\zeta_R^{2}dx.
$$
Integrating by parts, we obtain
\begin{equation} \label{LY1}
\int_D|\nabla u|^2\zeta_R^{2}dx+2\int_D u\zeta_R \nabla u
\nabla\zeta_R dx=\sqrt{p}\int_D u^pv\zeta_R^{2}dx.
\end{equation}
Applying the Cauchy inequality, we get
\begin{equation} \label{LY2}
|\int_D u\zeta_R \nabla u \nabla\zeta_R dx| \leq \delta
\int_D|\nabla u|^2 \zeta_R^{2}dx +C\int_D u^2 |\nabla\zeta_R|^2 dx
\end{equation}
for any $\delta \in (0,1/2)$. According to Theorem \ref{th3.1} (R1),
it follows $u,v \in L^{2^*}(R^n)$.
By the H\"older inequality, we obtain
\begin{equation} \label{LY3}
\int_D u^2|\nabla\zeta_R|^2 dx \leq (\int_D u^{2^*}dx)^{1-2/n}
(\int_D|\nabla\zeta_R|^n dx)^{2/n} \leq C.
\end{equation}
Noting $u^pv \in L^1(R^n)$, from (\ref{LY1})-(\ref{LY3}) we deduce
$$
\int_D|\nabla u|^2\zeta_R^{2}dx \leq C.
$$
Letting $R \to \infty$ yields
$$
\int_{R^n}|\nabla u|^2dx <\infty.
$$
Similarly, we also obtain
$$
\int_{R^n}|\nabla v|^2dx <\infty.
$$
Combining the results above, we can see
$$
\int_{R^n}(u^{2^*}+v^{2^*}+u^pv+|\nabla u|^2+|\nabla v|^2)dx<\infty.
$$
Therefore, we can find $R_j$ such that
\begin{equation} \label{conv}
\lim_{R_j \to \infty}R_j\int_{\partial B_{R_j}}(u^{2^*}+v^{2^*}+u^pv
+|\nabla u|^2+|\nabla v|^2)ds =0.
\end{equation}

{\it Step 3.} Multiplying the first equation of (\ref{Sys}) by $u$
and integrating on $D$, we get
\begin{equation} \label{renj}
\int_D|\nabla u|^2dx-\int_{\partial D}u\partial_{\nu}uds
=\sqrt{p}\int_D u^pvdx.
\end{equation}
Here $\nu$ is the unit outward norm vector on $\partial D$.
By the H\"older inequality, from (\ref{conv}) we deduce
$$
|\int_{\partial D}u\partial_{\nu}uds| \leq CR^{\frac{n-1}{n}-1/2-1/2^*}(R\int_{\partial D}
|\nabla u|^2ds)^{1/2} (R\int_{\partial D}u^{2^*}ds)^{1/2^*} \to 0
$$
when $R=R_j \to \infty$. Letting $R=R_j \to \infty$ in (\ref{renj}), we have
$\|\nabla u\|_2^2=\sqrt{p}\|u^pv\|_1$. Similarly, we can also obtain
$\|\nabla v\|_2^2=\sqrt{p}\|u^pv\|_1$.
\end{proof}

The following result shows
that there does not exist $L^{\frac{n(p-1)}{2}}(R^n)$-solution if
$p$ is not equal to the critical exponent $2^*-1$.

\begin{theorem} \label{th3.3}
If a classical solution $u$ belongs to $L^{\frac{n(p-1)}{2}}(R^n)$, then
$p=2^*-1$, and hence $L^{n(p-1)/2}(R^n)=L^{2^*}(R^n)$.
\end{theorem}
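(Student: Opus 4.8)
The plan is to establish a Pohozaev identity for the system (\ref{Sys}) and play it off against the energy identities already obtained in Theorem \ref{th3.2}. The crucial observation is that Theorem \ref{th3.2} has in fact prepared all the integrability and the boundary--decay sequence that a rigorous Pohozaev argument needs, so the only work left is to carry out the (somewhat lengthy) integration by parts on large balls and let the radius go to infinity.

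First I would collect what the previous results hand us. A classical solution $u\in L^{\frac{n(p-1)}{2}}(R^n)$ of (\ref{Sys}) also solves the integral system (\ref{IE}) (see the Remark after Theorem \ref{th1.1}), so Theorems \ref{th3.1} and \ref{th3.2} apply: $u,v$ are bounded and tend to $0$ at infinity, $u^pv\in L^1(R^n)$, $u,v\in\mathcal{D}^{1,2}(R^n)$, the identities
$$
\int_{R^n}|\nabla u|^2dx=\int_{R^n}|\nabla v|^2dx=\sqrt{p}\int_{R^n}u^pv\,dx
$$
hold, and — from (\ref{conv}) — there is a sequence $R_j\to\infty$ along which $R_j\int_{\partial B_{R_j}}(|\nabla u|^2+|\nabla v|^2+u^pv)\,ds\to0$.

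Next I would run the Pohozaev computation on the balls $B_{R_j}(0)$. Multiplying the first equation of (\ref{Sys}) by $x\cdot\nabla u$ and integrating over $B_{R_j}$, integration by parts gives $\frac{2-n}{2}\int_{B_{R_j}}|\nabla u|^2dx+b_1(R_j)=\sqrt{p}\int_{B_{R_j}}u^{p-1}v\,(x\cdot\nabla u)\,dx=\frac{\sqrt{p}}{p}\int_{B_{R_j}}v\,(x\cdot\nabla u^p)\,dx$, where $b_1(R_j)$ is a surface integral with $|b_1(R_j)|\le CR_j\int_{\partial B_{R_j}}|\nabla u|^2ds$ (using $|x|=R_j$ there and $|(x\cdot\nabla u)\,\partial_\nu u|\le R_j|\nabla u|^2$). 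Integrating the right-hand side by parts once more turns it into $\frac{\sqrt{p}}{p}\big(R_j\int_{\partial B_{R_j}}u^pv\,ds-n\int_{B_{R_j}}u^pv\,dx-\int_{B_{R_j}}u^p(x\cdot\nabla v)\,dx\big)$. Treating the second equation the same way expresses $\int_{B_{R_j}}u^p(x\cdot\nabla v)\,dx$ in terms of $\int_{B_{R_j}}|\nabla v|^2dx$ plus a boundary term bounded by $CR_j\int_{\partial B_{R_j}}|\nabla v|^2ds$. Combining the two relations and letting $j\to\infty$ — every boundary term vanishes by (\ref{conv}), and every remaining volume integral converges to its integral over $R^n$ by monotone convergence since $u^pv,|\nabla u|^2,|\nabla v|^2\ge0$ and lie in $L^1(R^n)$ — yields the Pohozaev identity
$$
-n\sqrt{p}\int_{R^n}u^pv\,dx=\frac{2-n}{2}\Big(p\int_{R^n}|\nabla u|^2dx+\int_{R^n}|\nabla v|^2dx\Big).
$$

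Finally, substituting the energy identities $\int|\nabla u|^2=\int|\nabla v|^2=\sqrt{p}\int u^pv$ into this gives $-n\sqrt{p}\int u^pv=\frac{2-n}{2}(p+1)\sqrt{p}\int u^pv$; since $u,v>0$ we have $\int_{R^n}u^pv\,dx>0$, so we may divide it out to obtain $n=\frac{n-2}{2}(p+1)$, i.e. $p+1=\frac{2n}{n-2}=2^*$, hence $p=2^*-1$. Then $\frac{n(p-1)}{2}=\frac{n}{2}\cdot\frac{4}{n-2}=\frac{2n}{n-2}=2^*$, so $L^{\frac{n(p-1)}{2}}(R^n)=L^{2^*}(R^n)$. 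The only delicate point is the bookkeeping inside the Pohozaev identity: one must check that each surface integral produced by the repeated integrations by parts is dominated by the exact combination appearing in (\ref{conv}), and that passing to the limit in the volume integrals is legitimate — but this is precisely what Theorem \ref{th3.2} was designed to guarantee.
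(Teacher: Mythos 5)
Your proposal is correct and follows essentially the same route as the paper: the Pohozaev identity obtained by testing the two equations with $x\cdot\nabla u$ and $x\cdot\nabla v$ on the balls $B_{R_j}$, with the boundary terms killed along the sequence from (\ref{conv}) and the result combined with the energy identity (\ref{yulian}) to force $\frac{n-2}{2}(p+1)=n$, i.e. $p=2^*-1$. The only cosmetic difference is bookkeeping: the paper adds the two Pohozaev relations and integrates $x\cdot\nabla(u^pv)$ in one stroke, while you handle the cross term $\int u^p(x\cdot\nabla v)\,dx$ via the second equation separately, which yields the same identity.
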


\begin{proof}
Write $B=B_R(0)$.
Multiply (\ref{Sys}) by $x \cdot \nabla u$ and $x \cdot \nabla v$, respectively.
Integrating on $B$, we get
$$
-\int_B(x\cdot \nabla u)\Delta udx=\frac{1}{\sqrt{p}}\int_B
v(x\cdot \nabla u^p)dx,
$$
$$
-\int_B(x\cdot \nabla v)\Delta vdx=\sqrt{p}\int_Bu^p(x\cdot \nabla v)dx.
$$
Integrating by parts yields
$$\begin{array}{ll}
&-p\displaystyle\int_{\partial B}|x||\partial_{\nu}u|^2ds
+\frac{p}{2}\int_{\partial B}|x||\nabla u|^2ds
-\frac{n-2}{2}p\int_B|\nabla u|^2dx\\[3mm]
&=\sqrt{p}\displaystyle\int_B v(x\cdot \nabla u^p)dx,
\end{array}
$$
and
$$\begin{array}{ll}
&-\displaystyle\int_{\partial B}|x||\partial_{\nu}v|^2ds
+\frac{1}{2}\int_{\partial B}|x||\nabla v|^2ds
-\frac{n-2}{2}\int_B|\nabla v|^2dx\\[3mm]
&=\sqrt{p}\displaystyle\int_Bu^p(x\cdot \nabla v)dx.
\end{array}
$$
Adding two results together and integrating by parts again, we obtain
$$\begin{array}{ll}
&-\displaystyle\int_{\partial B}|x|(p|\partial_{\nu}u|^2+|\partial_{\nu}v|^2)ds
+\frac{1}{2}\int_{\partial B}|x|(p|\nabla u|^2+|\nabla v|^2)ds\\[3mm]
&-\displaystyle\frac{n-2}{2}\int_B(p|\nabla u|^2+|\nabla v|^2)dx\\[3mm]
&=\sqrt{p}\displaystyle\int_B x\cdot \nabla (u^pv)dx=\sqrt{p}\int_{\partial B}
|x|u^pvds-n\sqrt{p}\int_Bu^pvdx.
\end{array}
$$
Letting $R=R_j \to \infty$ and using (\ref{conv}), we have
$$
\frac{n-2}{2}\int_B(p|\nabla u|^2+|\nabla v|^2)dx
=n\sqrt{p}\int_Bu^pvdx.
$$
By (\ref{yulian}) we see $p=2^*-1$ finally.
\end{proof}

\begin{theorem} \label{th3.4}
If a classical solution $u$ of (\ref{Sys})
belongs to $L^{\frac{n(p-1)}{2}}(R^n)$, then
\begin{equation} \label{cla}
u(x)=v(x)=c(\frac{t}{t^2+|x-x^*|^2})^{\frac{n-2}{2}}.
\end{equation}
with some constant $c=c(n)$ and for some $t>0$.
\end{theorem}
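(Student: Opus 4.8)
The plan is to deduce the statement from Theorem~\ref{th3.3} together with the pointwise identity $u\equiv v$, and then to invoke the classification of Chen and Li \cite{CL}. By Theorem~\ref{th3.3} we already know $p=2^*-1$, so the system (\ref{Sys}) sits exactly at the critical exponent; hence it is enough to prove that $u\equiv v$. Once that is known, the first equation of (\ref{Sys}) reads $-\Delta u=\sqrt{p}\,u^{2^*-1}$ with $u>0$ a bounded positive classical solution on $R^n$ (the boundedness and the decay (\ref{boundary}) being Theorem~\ref{th3.1}(R2)), and (\ref{cla}) will follow from \cite{CL}.

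To prove $u\equiv v$, I set $w:=u-v$. Subtracting the two equations of (\ref{Sys}) and writing $u^{p}=u^{p-1}u$ gives $-\Delta w+\sqrt{p}\,u^{p-1}w=0$ in $R^n$. I then run the cut-off computation of the proof of Theorem~\ref{th3.2} with $w$ in place of $u$: multiplying this equation by $w\zeta_R^{2}$ with $\zeta_R$ as in (\ref{cut}), integrating over $R^n$ and integrating by parts yields
$$
\int_{R^n}|\nabla w|^2\zeta_R^{2}\,dx+2\int_{R^n}w\zeta_R\,\nabla w\cdot\nabla\zeta_R\,dx+\sqrt{p}\int_{R^n}u^{p-1}w^2\zeta_R^{2}\,dx=0 .
$$
All three integrals are finite: by Theorem~\ref{th3.1}(R1) we have $u,v\in L^{2^*}(R^n)$, hence $w\in L^{2^*}(R^n)$ and, by the H\"older inequality, $u^{p-1}w^2=u^{2^*-2}w^2\in L^1(R^n)$; moreover $|\nabla w|^2\in L^1(R^n)$ since $u,v\in\mathcal{D}^{1,2}(R^n)$ by Theorem~\ref{th3.2}. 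For $\delta\in(0,1/2)$ the cross term is absorbed by Cauchy's inequality exactly as in (\ref{LY2}), and the leftover error $\int_{R^n}w^2|\nabla\zeta_R|^2\,dx$ is now $o(1)$ as $R\to\infty$: the H\"older estimate of (\ref{LY3}) bounds it by $C\big(\int_{\{R\le|x|\le 2R\}}w^{2^*}\,dx\big)^{2/2^*}$, which tends to $0$ because $w\in L^{2^*}(R^n)$. Letting $R\to\infty$ and using monotone convergence, I obtain $\int_{R^n}|\nabla w|^2\,dx+\sqrt{p}\int_{R^n}u^{p-1}w^2\,dx\le 0$. Both terms are nonnegative, so both vanish; as $u>0$ everywhere, the vanishing of the second forces $w\equiv 0$, that is, $u\equiv v$.

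It remains to finish via \cite{CL}. With $u\equiv v$ and $p=2^*-1$, the solution solves $-\Delta u=\sqrt{p}\,u^{2^*-1}$, $u>0$ in $R^n$. Putting $u=\mu\tilde u$ with $\mu=p^{-1/(2(p-1))}$ reduces this to $-\Delta\tilde u=\tilde u^{2^*-1}$, and since $\tilde u$ is a positive bounded classical solution, the theorem of Chen and Li gives
$$
\tilde u(x)=\big(n(n-2)\big)^{(n-2)/4}\Big(\frac{t}{t^2+|x-x^*|^2}\Big)^{(n-2)/2}
$$
for some $t>0$ and some $x^*\in R^n$. Undoing the rescaling, and recalling that $p=2^*-1$ is determined by $n$, this is precisely (\ref{cla}) with a constant $c=c(n)$. (For $n\ge4$ the identity $u\equiv v$ also follows from the result of Quittner and Souplet recalled in the Introduction; the argument above has the advantage of covering $n=3$ as well.)

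The main obstacle is exactly the identity $u\equiv v$: everything hinges on being allowed to test the equation for $w$ against $w$ over all of $R^n$ with no surviving boundary contribution, and this relies on the integrability facts $u,v\in L^{2^*}(R^n)\cap\mathcal{D}^{1,2}(R^n)$ and $u^{p-1}w^2\in L^1(R^n)$ accumulated in Theorems~\ref{th3.1}--\ref{th3.2}; the cut-off computation is the device that makes this rigorous, the only new point compared with the proof of Theorem~\ref{th3.2} being that the error term must be shown to vanish rather than merely stay bounded. Once $u\equiv v$ is in hand, the classification is an immediate citation of \cite{CL}.
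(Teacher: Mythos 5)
Your proposal is correct and follows essentially the same route as the paper: subtract the two equations to get $\Delta W=\sqrt{p}\,u^{p-1}W$ for $W=u-v$, test against $W$ using the integrability $W\in L^{2^*}(R^n)$, $\nabla W\in L^2(R^n)$ from Theorems \ref{th3.1}--\ref{th3.2} to conclude $W\equiv 0$, then reduce to the critical Lane--Emden equation via Theorem \ref{th3.3} and cite \cite{CL}. The only (harmless) difference is technical: you kill the tail with the cut-off $\zeta_R$ and the vanishing of $\int w^2|\nabla\zeta_R|^2\,dx$, whereas the paper kills the boundary term $\int_{\partial B_{R_j}}W\partial_\nu W\,ds$ along a good sequence of radii $R_j$ chosen as in (\ref{conv}).
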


\begin{proof}
{\it Step 1.} We claim $u \equiv v$.

Let $W=u-v$. By Theorems \ref{th3.1}
and \ref{th3.2}, we see
$$
\int_{R^n}(|W|^{2^*}+ |\nabla W|^2)dx<\infty.
$$
Thus, when $R=R_j \to \infty$,
$$
R\int_{\partial B_R(0)} (|W|^{2^*}+ |\partial_{\nu} W|^2) ds \to 0.
$$
From (\ref{Sys}), it follows
$\Delta W=\sqrt{p}u^{p-1}W$. Therefore,
\begin{equation} \label{jidu}
\int_B |\nabla W|^2dx+\sqrt{p}\int_B u^{p-1}W^2dx
=\int_{\partial B}W \partial_{\nu}W ds.
\end{equation}
Here $B=B_R(0)$.
By the H\"older inequality, as $R=R_j \to \infty$,
$$\begin{array}{ll}
&\quad |\displaystyle\int_{\partial B}W \partial_{\nu}W ds|\\[3mm]
&\leq C(R\displaystyle\int_{\partial B}|W|^{2^*}ds)^{1/2^*}
(R\int_{\partial B} |\partial_{\nu}W|^2 ds)^{1/2}
R^{(n-1)(1/2-1/2^*)-1/2-1/2^*}\\[3mm]
& \to 0.
\end{array}
$$
Inserting this into (\ref{jidu}) with $R=R_j \to \infty$,
we get
$$
\int_{R^n}(|\nabla W|^2+\sqrt{p}u^{p-1}W^2)dx=0,
$$
which implies $u \equiv v$.

{\it Step 2.} By virtue of $u \equiv v$ and Theorem \ref{th3.3},
(\ref{Sys}) is reduced to the single equation
$$
-\Delta u=\sqrt{p} u^{2^*-1}, \quad u>0 ~in ~R^n.
$$
According to the classification results in \cite{CL}, the positive
solutions must be the form as (\ref{cla})
in the critical case.
\end{proof}

The argument above implies $u \in L^{\frac{n(p-1)}{2}}(R^n)$ is
equivalent to (\ref{cla}).

At last, we complete the proof of Theorem \ref{th1.2}.

\begin{theorem} \label{th3.5}
If the classical solution $u$ of (\ref{Sys}) belongs to
$L^{\frac{n(p-1)}{2}}(R^n)$, if and only if one of the following
items holds

(C1) $u \in L^{2^*}(R^n)$ and $p=2^*-1$;

(C2) $u \in L^\infty(R^n)$ and $u(x)=O(|x|^{2-n})$ as $|x| \to
\infty$;

(C3) $u \in \mathcal{D}^{1,2}(R^n)$.
\end{theorem}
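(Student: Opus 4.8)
The plan is to read this theorem as a repackaging of Theorems \ref{th3.1}--\ref{th3.4} together with three short converse implications. For the forward direction, suppose $u \in L^{\frac{n(p-1)}{2}}(R^n)$. Theorem \ref{th3.3} gives $p = 2^*-1$, and since then $p-1 = 2^*-2 = \frac{4}{n-2}$ we get $\frac{n(p-1)}{2} = \frac{2n}{n-2} = 2^*$; so $u \in L^{2^*}(R^n)$, which is (C1). Theorem \ref{th3.2} gives $u \in \mathcal{D}^{1,2}(R^n)$, which is (C3). Theorem \ref{th3.4} identifies $u=v=c\big(\frac{t}{t^2+|x-x^*|^2}\big)^{(n-2)/2}$, which is bounded and satisfies $u(x)=O(|x|^{2-n})$ as $|x|\to\infty$, i.e.\ (C2). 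Thus (1) implies each of (C1), (C2), (C3), and it remains to prove the three reverse implications.

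Two of these are immediate. If (C1) holds, then $p=2^*-1$ forces $\frac{n(p-1)}{2}=2^*$ as above, so $L^{2^*}(R^n)=L^{\frac{n(p-1)}{2}}(R^n)$ and $u$ already lies in the required space. If (C2) holds, split $\int_{R^n}u^{\frac{n(p-1)}{2}}\,dx$ over $B_1(0)$ and its complement: boundedness of $u$ controls the first piece, while on $R^n\setminus B_1(0)$ one has $u^{\frac{n(p-1)}{2}}(x)\le C|x|^{-(n-2)\frac{n(p-1)}{2}}$, which is integrable at infinity exactly when $(n-2)\frac{p-1}{2}>1$, i.e.\ $p>\frac{n}{n-2}$. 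That is (\ref{base}), which holds because a positive solution of (\ref{Sys}) exists. Hence $u\in L^{\frac{n(p-1)}{2}}(R^n)$.

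The substantive case is (C3)$\Rightarrow$(1). From $u\in\mathcal{D}^{1,2}(R^n)$ and the Sobolev inequality, $u\in L^{2^*}(R^n)$; I would then aim to prove $p=2^*-1$, after which $L^{2^*}=L^{\frac{n(p-1)}{2}}$ closes the loop. To obtain $p=2^*-1$ I would rerun the integration-by-parts scheme of Theorems \ref{th3.2}--\ref{th3.3}. Multiplying the first equation of (\ref{Sys}) by $u$ and integrating over $B_R(0)$, the boundary term $\int_{\partial B_R}u\,\partial_\nu u\,ds$ is dominated, via H\"older on $\partial B_R$, by $\big(R\int_{\partial B_R}u^{2^*}\,ds\big)^{1/2^*}\big(R\int_{\partial B_R}|\nabla u|^2\,ds\big)^{1/2}$, the net power of $R$ being $\frac{n-1}{n}-\frac{1}{2^*}-\frac12=0$; since $u^{2^*},|\nabla u|^2\in L^1(R^n)$ there is a sequence $R_j\to\infty$ along which this vanishes, yielding $u^pv\in L^1(R^n)$ and $\|\nabla u\|_2^2=\sqrt{p}\,\|u^pv\|_1$. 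One then needs the analogous facts for $v$ before the Pohozaev identity of Theorem \ref{th3.3} can be applied to conclude $p=2^*-1$.

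The main obstacle is precisely this last step: controlling the boundary terms carrying $v$ requires $v\in L^{2^*}(R^n)\cap\mathcal{D}^{1,2}(R^n)$, which must be extracted from $u\in L^{2^*}(R^n)$ (and $u^pv\in L^1$). From $v=c_2\int_{R^n}\frac{u^p(y)}{|x-y|^{n-2}}\,dy$ and $u\in L^{2^*}(R^n)$ one has $u^p\in L^{2n/(p(n-2))}$, and the Hardy-Littlewood-Sobolev inequality then places $v$ in an $L^r$-space, but only when $p$ lies in a dimensional window (roughly $p(n-2)>4$ on the low side, $p<2^*$ on the high side) which (\ref{base}) does not by itself guarantee in low dimensions or in the far supercritical range. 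Overcoming this borderline regime --- equivalently, ruling out $\mathcal{D}^{1,2}$-solutions there --- is where the real work lies; I would attack it either by bootstrapping integrability directly on the integral system (\ref{IE}) starting from $u\in L^{2^*}(R^n)$ and the lower bound $u(x)\gtrsim|x|^{2-n}$, or by first upgrading $u\in L^{2^*}$ to a pointwise decay estimate and then reducing to case (C2).
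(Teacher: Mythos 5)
Your treatment of the easy implications matches the paper: necessity is read off from Theorems \ref{th3.1}--\ref{th3.4}, (C1) gives $L^{2^*}=L^{\frac{n(p-1)}{2}}$ immediately, and your splitting of $\int u^{\frac{n(p-1)}{2}}$ for (C2), using $p>\frac{n}{n-2}$ from (\ref{base}) (legitimate, since a positive solution of (\ref{Sys}) satisfies (\ref{IE}) and Theorem \ref{th2.1} applies), is a correct filling-in of what the paper dismisses as ``easy''. Your route for (C3) also starts exactly as the paper does: Sobolev gives $u\in L^{2^*}(R^n)$, and the cutoff/boundary-term computation (your sharp truncation over $B_{R_j}$ is a harmless variant of the paper's (\ref{LY1})--(\ref{LY3})) gives $u^pv\in L^1(R^n)$ and $\|\nabla u\|_2^2=\sqrt{p}\,\|u^pv\|_1$.

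The genuine gap is that you stop there. To conclude $p=2^*-1$ one must run the Pohozaev computation of Theorem \ref{th3.3}, and that computation uses the $v$-side ingredients as well: finiteness of $\int_{R^n}|\nabla v|^2dx$, the companion identity $\|\nabla v\|_2^2=\sqrt{p}\,\|u^pv\|_1$ from (\ref{yulian}), and the vanishing along a sequence $R_j\to\infty$ of the boundary terms in (\ref{conv}) that involve $v^{2^*}$, $|\nabla v|^2$ and $u^pv$. You correctly diagnose that this is the crux --- and your observation that a one-shot Hardy--Littlewood--Sobolev bound $v=c_2|x|^{2-n}*u^p$ with only $u\in L^{2^*}$ fails outside a window of $p$ (in particular for large supercritical $p$, which cannot be excluded a priori here) is sound --- but a proof proposal cannot end with ``I would attack it either by bootstrapping on (\ref{IE}) or by upgrading to pointwise decay''; neither strategy is executed, and the second (reducing to (C2)) would itself require decay estimates at least as hard as what is being avoided. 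The paper closes this step very briefly: having obtained $u^pv\in L^1$ from (\ref{LY1})--(\ref{LY3}) read in the reverse direction, it asserts that (\ref{conv}) holds and that the proof of Theorem \ref{th3.3} ``still works''. So your identification of where the difficulty sits is fair --- the paper is terse at precisely that point --- but to match the paper's conclusion you must either reproduce that assertion with the supporting cutoff argument applied to the second equation of (\ref{Sys}) (which needs some control of $v$, e.g.\ $v\in L^{2^*}$ or a substitute for the estimate (\ref{LY3}) with $u$ replaced by $v$), or supply your own derivation of the $v$-estimates; as written, the implication (C3)$\Rightarrow p=2^*-1$ is not proved.
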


\begin{proof}
The necessity can be seen in Theorems \ref{th3.1}-\ref{th3.4}.
Next, we state the sufficiency.

Both (C1) and (C2) can lead easily to $u \in
L^{\frac{n(p-1)}{2}}(R^n)$. So, $u \in L^{\frac{n(p-1)}{2}}(R^n)$
is equivalent to (C1) and (C2).

Finally, (C3) implies (C1). In fact, by the Sobolev inequality we get
$u \in L^{2^*}(R^n)$. On the other hand, using
(\ref{LY1})-(\ref{LY3}) we can deduce $u^pv \in L^1(R^n)$ from $u
\in \mathcal{D}^{1,2}(R^n)$. Thus, (\ref{conv}) holds, and the
proof of Theorem \ref{th3.3} still works. This shows $p=2^*-1$.
\end{proof}

\paragraph{Remark.}
$u(x)=O(|x|^{2-n})$ as $|x| \to \infty$ in condition (C2) implies
$0<c_1 \leq u(x)|x|^{n-2} \leq c_2$ as $|x| \to \infty$. It is led
to by (\ref{aq}).

\section{Supercritical case}

If (\ref{base}) holds, then (\ref{Sys}) has a singular solution.

\begin{theorem} \label{th4.1}
Let $u(x)=\frac{c_1}{|x|^{t_1}}$ and $v(x)=\frac{c_2}{|x|^{t_2}}$
solve (\ref{Sys}) on $R^n \setminus \{0\}$, where $c_1,c_2,t_1,t_2$ are positive constants.
Then such singular solutions must be
$$
u(x)=v(x)=\frac{c}{|x|^{\frac{2}{p-1}}}
$$
with $c=[\frac{2n}{\sqrt{p}(p-1)}-\frac{4\sqrt{p}}{(p-1)^2}]^{\frac{1}{p-1}}$.
\end{theorem}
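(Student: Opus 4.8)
The plan is essentially computational: substitute the ansatz $u=c_1|x|^{-t_1}$, $v=c_2|x|^{-t_2}$ into (\ref{Sys}) and match powers of $|x|$ and then coefficients. First I would record the elementary identity $-\Delta(|x|^{-t})=t(n-2-t)|x|^{-t-2}$ for $x\neq 0$, which follows from $\Delta f=f''+\frac{n-1}{r}f'$ applied to $f=r^{-t}$. Plugging into the first equation of (\ref{Sys}) gives
$$
c_1 t_1(n-2-t_1)|x|^{-t_1-2}=\sqrt{p}\,c_1^{p-1}c_2\,|x|^{-(p-1)t_1-t_2},
$$
and into the second equation,
$$
c_2 t_2(n-2-t_2)|x|^{-t_2-2}=\sqrt{p}\,c_1^{p}\,|x|^{-p t_1}.
$$

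Matching the exponents (two nonzero monomials in $|x|$ agree on $R^n\setminus\{0\}$ only if their exponents coincide) yields the linear system $t_1+2=(p-1)t_1+t_2$ and $t_2+2=p t_1$. Eliminating $t_2$ gives $(2p-2)t_1=4$, hence $t_1=\frac{2}{p-1}$, and then $t_2=pt_1-2=\frac{2}{p-1}$; thus the only admissible decay rate is the slow rate $\frac{2}{p-1}$. Since the right-hand sides are positive and $c_1,c_2,t_1,t_2>0$, we need $n-2-t_1>0$, i.e. $p>\frac{n}{n-2}$, which is exactly (\ref{base}) and is consistent with the Liouville result Theorem \ref{th2.1}.

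Next I would pin down the constants. Writing $t:=\frac{2}{p-1}$ and matching coefficients, the two equations read $c_1 t(n-2-t)=\sqrt{p}\,c_1^{p-1}c_2$ and $c_2 t(n-2-t)=\sqrt{p}\,c_1^{p}$. Dividing one by the other gives $c_1/c_2=c_2/c_1$, hence $c_1=c_2=:c$ (both positive), so $u\equiv v$. Substituting back, $c^{p-1}=\frac{t(n-2-t)}{\sqrt{p}}$, and a short computation with $t=\frac{2}{p-1}$ gives
$$
c^{p-1}=\frac{2(n-2)(p-1)-4}{\sqrt{p}\,(p-1)^2}=\frac{2n}{\sqrt{p}(p-1)}-\frac{4\sqrt{p}}{(p-1)^2},
$$
which is exactly the stated value of $c$.

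There is no real analytic obstacle: the whole argument is the substitution above, solving a $2\times2$ linear system for the exponents, and a ratio computation for the amplitudes. The only points needing a little care are (i) justifying that both exponent equations must hold simultaneously, and (ii) recording the constraint $n-2-t_1>0$, which both makes $c^{p-1}>0$ so that $c$ is a genuine positive constant and explains why these singular solutions exist precisely in the range where (\ref{base}) holds.
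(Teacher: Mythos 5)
Your proposal is correct and follows essentially the same route as the paper: substitute the power-law ansatz, use $-\Delta(|x|^{-t})=t(n-2-t)|x|^{-t-2}$, match exponents to get $t_1=t_2=\frac{2}{p-1}$, and match coefficients to get $c_1=c_2$ with $c^{p-1}=\frac{t(n-2-t)}{\sqrt{p}}$, which equals the stated constant. The only (harmless) difference is that you make explicit the division step giving $c_1=c_2$ and derive the constraint $n-2>\frac{2}{p-1}$ from positivity of the coefficients, whereas the paper simply invokes its standing condition (\ref{base}).
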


\begin{proof}
Write $U(r)=U(|x|)=u(x)$ and $V(r)=V(|x|)=v(x)$. Thus,
$$\begin{array}{ll}
&-U''-\frac{n-1}{r}U'=\frac{c_1t_1}{r^{t_1+2}}(n-t_1-2),
\\[3mm]
&-V''-\frac{n-1}{r}V'=\frac{c_2t_2}{r^{t_2+2}}(n-t_2-2).
\end{array}
$$
Since $u,v$ solve (\ref{Sys}), it is easy to get
$$
\left \{
   \begin{array}{l}
      t_1+2=t_1(p-1)+t_2,\\
      t_2+2=t_1p,
   \end{array}
   \right.
$$
and
$$
\left \{
   \begin{array}{l}
      c_1t_1(n-t_1-2)=\sqrt{p}c_1^{p-1}c_2,\\
      c_2t_2(n-t_2-2)=\sqrt{p}c_1^p
   \end{array}
   \right.
$$
as long as $n-2>\max\{t_1,t_2\}$.
Therefore, $t_1=t_2=\frac{2}{p-1}$ and $c_1=c_2=
[\frac{2n}{\sqrt{p}(p-1)}-\frac{4\sqrt{p}}{(p-1)^2}]^{\frac{1}{p-1}}$.
In view of (\ref{base}), the condition $n-2>\max\{t_1,t_2\}$
is true obviously.
\end{proof}

\paragraph{Remark.}
The fact $u \equiv v$ is natural by an analogous argument in Step 4 of the proof
of Theorem \ref{th4.4}.

\vskip 5mm

Next, we search for bounded solutions.

An example is the pair of cylinder-shaped solutions $(u^*,v^*)$ (cf. \cite{CLO4}).
Let
\begin{equation} \label{wlty}
u_*(x)=c(\frac{t}{t^2+|x-x^*|^2})^{\frac{n-2}{2}}.
\end{equation}
According to Theorem
\ref{th1.2}, $u_*$ solves (\ref{Sys}) in the whole space $R^n$
in the critical case $p=\frac{n+2}{n-2}$. Thus, it is not difficult to
see that $u^*(x,x_{n+1})=u_*(x)$ and $v^*(x_0,x)=u_*(x)$
still solves (\ref{Sys}) in $R^{n+1}$. In view of
$p>\frac{n+3}{n-1}$, the problem (\ref{Sys}) which $u^*$ satisfies in $R^{n+1}$
is equipped with the supercritical exponent.
Clearly, this pair of positive solution $(u^*,v^*)$ is neither radial nor decaying when $|x| \to \infty$.
We also see $u^* \neq v^*$ because the generating lines of the cylinders are different.

\vskip 5mm
At last, we search for a radial bounded solution decaying with the slow rate
$\frac{2}{p-1}$.

In order to find the existence of entire solutions in $R^n$, we need
the following nonexistence result on a bounded domain.
It can be verified by the Pohozaev identity.

\begin{theorem} \label{th6.3}
Let $D \subset R^n$ be a ball centered at the origin. If
\begin{equation} \label{nc}
p \geq 2^*-1,
\end{equation}
then the following boundary value problem has no
positive radial solution in $C^{2}(D)\cap C^{1}(\bar{D})$
\begin{equation} \label{L1}
    -\Delta u=\sqrt{p}u^{p-1}v, \quad in \quad D,
\end{equation}
\begin{equation} \label{L2}
    -\Delta v=\sqrt{p}u^p, \quad in \quad D,
\end{equation}
\begin{equation} \label{L3}
    u=v=0 \quad on \quad \partial D.
\end{equation}
\end{theorem}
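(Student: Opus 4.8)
The plan is to establish a Pohozaev-type identity for the system \eqref{L1}--\eqref{L3} on the ball $D$ and show that, under the condition \eqref{nc}, it forces the only solution to be trivial. Suppose $(u,v)$ is a positive radial solution in $C^2(D)\cap C^1(\bar D)$. First I would multiply \eqref{L1} by $x\cdot\nabla u$ and \eqref{L2} by $x\cdot\nabla v$ and integrate over $D$, exactly as in the proof of Theorem~\ref{th3.3}, but now keeping all boundary terms on $\partial D$ since $D$ is bounded. Because $u=v=0$ on $\partial D$, the tangential components of $\nabla u,\nabla v$ vanish on $\partial D$, so $|\nabla u|=|\partial_\nu u|$ and $|\nabla v|=|\partial_\nu v|$ there, and the right-hand side boundary term $\int_{\partial D}|x|u^pv\,ds$ also vanishes. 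Integrating by parts (on the left, using $-\int_D (x\cdot\nabla u)\Delta u\,dx = \frac{n-2}{2}\int_D|\nabla u|^2dx - \frac12\int_{\partial D}|x||\partial_\nu u|^2 ds$; on the right, $\sqrt p\int_D x\cdot\nabla(u^pv)\,dx = -n\sqrt p\int_D u^pv\,dx$ after using the Dirichlet condition) yields, after adding the $u$-identity multiplied by $\sqrt p$ appropriately and the $v$-identity, the relation
$$
\frac{n-2}{2}\int_D\bigl(p|\nabla u|^2+|\nabla v|^2\bigr)dx
=\frac12\int_{\partial D}|x|\bigl(p|\partial_\nu u|^2+|\partial_\nu v|^2\bigr)ds
+n\sqrt p\int_D u^pv\,dx.
$$
(One has to be slightly careful with the coefficient bookkeeping coming from the $\sqrt p$'s and the factor $p$ attached to the first equation, mirroring the computation preceding \eqref{yulian}; the radial assumption guarantees all integrals are finite and the integrations by parts are legitimate.)

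Second, I would derive the energy identity analogous to \eqref{yulian} on the bounded domain: multiply \eqref{L1} by $u$ and \eqref{L2} by $v$, integrate over $D$, and use $u=v=0$ on $\partial D$ to kill the boundary terms $\int_{\partial D}u\,\partial_\nu u\,ds$ and $\int_{\partial D}v\,\partial_\nu v\,ds$. This gives
$$
\int_D|\nabla u|^2dx=\int_D|\nabla v|^2dx=\sqrt p\int_D u^pv\,dx.
$$
Substituting this into the Pohozaev relation, the left side becomes $\frac{n-2}{2}(p+1)\sqrt p\int_D u^pv\,dx$ and the interior term on the right is $n\sqrt p\int_D u^pv\,dx$, so
$$
\Bigl(\tfrac{n-2}{2}(p+1)-n\Bigr)\sqrt p\int_D u^pv\,dx
=\tfrac12\int_{\partial D}|x|\bigl(p|\partial_\nu u|^2+|\partial_\nu v|^2\bigr)ds\ \geq\ 0.
$$
The coefficient $\frac{n-2}{2}(p+1)-n$ is exactly $\frac{n-2}{2}\bigl(p-(2^*-1)\bigr)$, which is $\geq 0$ precisely under the hypothesis \eqref{nc}. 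Hence both sides vanish: $\int_D u^pv\,dx=0$, forcing $u\equiv 0$, which contradicts positivity. (When $p>2^*-1$ strictly one already gets a contradiction from the left side being positive and the right side being nonnegative — actually the signs must be compared carefully; the clean way is to note the left side is $\geq 0$ while, rearranged, the interior term on the correct side produces a contradiction with positivity. I would phrase it as: the identity forces $u^pv\equiv 0$ in $D$.)

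The main obstacle I anticipate is purely bookkeeping: getting the coefficients right when the two equations carry different powers ($p$ versus $p-1$) and the stray $\sqrt p$ factors, so that the combined identity produces exactly the factor $\frac{n-2}{2}(p-(2^*-1))$ and the boundary terms have a definite sign. The radial hypothesis is used only to ensure $C^1$ regularity up to $\partial D$ so the integrations by parts and boundary integrals are classical; no sign condition or maximum-principle input beyond positivity is needed. Once the algebra is organized as in the proof of Theorem~\ref{th3.3}, the nonexistence is immediate from \eqref{nc}.
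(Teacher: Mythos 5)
Your strategy is the same as the paper's: a Pohozaev identity obtained by multiplying \eqref{L1} and \eqref{L2} by $x\cdot\nabla u$ and $x\cdot\nabla v$, combined with the energy identity $\int_D|\nabla u|^2dx=\int_D|\nabla v|^2dx=\sqrt p\int_D u^pv\,dx$. But your Pohozaev identity carries the wrong sign on the boundary term, and that sign is the whole point of the proof. The correct computation gives
$$
-\int_D(x\cdot\nabla u)\Delta u\,dx=-\frac{n-2}{2}\int_D|\nabla u|^2dx-\frac12\int_{\partial D}|x||\partial_\nu u|^2ds
$$
(both terms negative; your displayed intermediate formula has the volume term with the opposite sign), and carrying this through the system (first identity weighted by $p$ and added to the second, or by substitution as in the paper) yields
$$
\frac{n-2}{2}\int_D\bigl(p|\nabla u|^2+|\nabla v|^2\bigr)dx=n\sqrt p\int_D u^pv\,dx-\frac12\int_{\partial D}|x|\bigl(p|\partial_\nu u|^2+|\partial_\nu v|^2\bigr)ds,
$$
with a \emph{minus} sign in front of the boundary integral, not a plus. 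With the correct sign, the energy identity turns this into $\bigl(\frac{n-2}{2}(p+1)-n\bigr)\sqrt p\int_D u^pv\,dx=-\frac12\int_{\partial D}|x|\bigl(p|\partial_\nu u|^2+|\partial_\nu v|^2\bigr)ds\le 0$, and since the left coefficient equals $\frac{n-2}{2}\bigl(p-(2^*-1)\bigr)\ge 0$ under \eqref{nc} while $\int_D u^pv\,dx>0$, the contradiction is immediate for $p>2^*-1$. With your sign, both sides of your final relation are nonnegative, and an equality between two nonnegative quantities gives no information; your parenthetical remark shows you sensed this, but the patch ``the identity forces $u^pv\equiv 0$'' does not follow from the identity you wrote (indeed your final identity is not even consistent with your own intermediate formula, which would produce $-n\sqrt p\int_D u^pv\,dx$ on the right).

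A second, smaller gap: even with the correct sign, the borderline case $p=2^*-1$ is not settled by sign-counting alone, since then the coefficient on the left vanishes and you only conclude $\partial_\nu u=\partial_\nu v=0$ on $\partial D$. You must rule this out, e.g.\ by integrating \eqref{L2} over $D$, which gives $\sqrt p\int_D u^p\,dx=-\int_{\partial D}\partial_\nu v\,ds=0$, impossible for a positive $u$; alternatively, use the radial structure: $r^{n-1}U'(r)=-\sqrt p\int_0^r s^{n-1}U^{p-1}V\,ds<0$, so $\partial_\nu u<0$ on $\partial D$, the boundary integral is strictly positive, and one obtains the strict inequality $\frac{n-2}{2}(p+1)-n<0$, contradicting \eqref{nc} in one stroke --- this is essentially how the paper concludes. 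Once the sign of the boundary term is corrected and this equality case is addressed, the rest of your proposal coincides with the paper's proof.
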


\begin{proof}
Suppose that $u,v$ are positive radial solutions, we will
deduce a contradiction.

Multiply (\ref{L1})
and (\ref{L2}) by $u$ and $v$, respectively. Integrating on $D$
and using (\ref{L3}), we have
\begin{equation} \label{L4}
\int_D|\nabla u|^2dx=\int_D|\nabla v|^2dx=\sqrt{p}\int_Du^pvdx.
\end{equation}

Since $u$ has the radial structure, $|\nabla u|^2=|\partial_{\nu} u|^2$ on
$\partial D$. Here $\nu$ is the unit outward norm vector on $\partial D$.

Multiplying (\ref{L1}) by $(x\cdot \nabla u)$ and integrating on $D$,
we get
$$
-\int_{\partial D}|x||\partial_{\nu}u|^2ds+\int_D|\nabla u|^2dx
+\frac{1}{2}\int_Dx\cdot \nabla(|\nabla u|^2)dx
=\frac{1}{\sqrt{p}}\int_Dv(x\cdot \nabla u^p)dx.
$$
Integrating by parts and noting (\ref{L3}), we obtain
$$
\frac{1}{2}\int_{\partial D}|x||\partial_{\nu}u|^2ds
+\frac{n-2}{2}\int_D|\nabla u|^2dx
=\frac{n}{\sqrt{p}}\int_D u^pvdx
+\frac{1}{\sqrt{p}}\int_Du^p(x\cdot \nabla v)dx.
$$
Similarly, from (\ref{L2}) we also deduce that
$$
-\frac{1}{2}\int_{\partial D}|x||\partial_{\nu}v|^2ds
-\frac{n-2}{2}\int_D|\nabla v|^2dx
=\sqrt{p}\int_Du^p(x\cdot \nabla v)dx.
$$
Combining two results above with (\ref{L4}) yields
$$
-\frac{1}{2}\int_{\partial D}|x|(|\partial_{\nu}v|^2
+\frac{1}{p}|\partial_{\nu}v|^2)ds
=\frac{n-2}{2}(\sqrt{p}+\frac{1}{\sqrt{p}})\int_Du^pvdx
-\frac{n}{\sqrt{p}}\int_D u^pvdx.
$$
Therefore,
$$
\frac{n-2}{2}(\sqrt{p}+\frac{1}{\sqrt{p}})-\frac{n}{\sqrt{p}}<0,
$$
which contradicts with (\ref{nc}).
\end{proof}

Based on the Liouville type result above, we can
search for positive solutions of (\ref{Sys}) with radial structures.
Let $u,v$ be radially symmetric about $x^* \in R^n$. We can write
$$\begin{array}{ll}
&U(r)=U(|x-x^*|)=u(x-x^*), \\[3mm]
&V(r)=V(|x-x^*|)=v(x-x^*).
\end{array}
$$

\begin{theorem} \label{th6.1}
Let $p \geq \max\{2,2^*-1\}$. Then the following ODE system
\begin{equation}
 \left \{
   \begin{array}{l}
      -(U''+\frac{n-1}{r}U')=\sqrt{p}U^{p-1}V, \quad -(V''+\frac{n-1}{r}V')=\sqrt{p}U^p,
      \quad r>0\\
      U'(0)=V'(0)=0, \quad U(0)=1, \quad V(0)=a,
   \end{array}
   \right. \label{ODE}          
 \end{equation}
has entire solutions for some positive constant $a$.
\end{theorem}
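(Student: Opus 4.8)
The plan is to exploit the fact that the shooting value $a=1$ collapses (\ref{ODE}) onto a single equation. For each $a>0$ the system (\ref{ODE}) is equivalent, near $r=0$, to the pair of integral equations
$$
U(r)=1-\sqrt p\int_0^r \tau^{1-n}\Big(\int_0^{\tau}s^{n-1}U^{p-1}(s)V(s)\,ds\Big)\,d\tau,\qquad
V(r)=a-\sqrt p\int_0^r \tau^{1-n}\Big(\int_0^{\tau}s^{n-1}U^{p}(s)\,ds\Big)\,d\tau ,
$$
so, $(U,V)\mapsto(U^{p-1}V,U^{p})$ being locally Lipschitz near $(1,a)$, a contraction argument produces a unique solution on a maximal interval $[0,R^{*})$ and the apparent singularity at $r=0$ is harmless. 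Take $a=1$ and let $\phi$ solve the single Lane--Emden problem $-(\phi''+\frac{n-1}{r}\phi')=\sqrt p\,\phi^{p}$, $\phi(0)=1$, $\phi'(0)=0$. Then $(\phi,\phi)$ solves (\ref{ODE}) with $a=1$, so by uniqueness it \emph{is} the solution, and it suffices to prove $\phi>0$ on $[0,\infty)$.

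While $\phi>0$ we have $(r^{n-1}\phi')'=-\sqrt p\,r^{n-1}\phi^{p}<0$ for $r>0$, so $r^{n-1}\phi'$ decreases from $0$; hence $\phi'<0$ and $0<\phi\le1$ as long as $\phi$ is positive. Thus $\phi$ is monotone and bounded on its existence interval, cannot blow up, and continues as long as it stays positive; and it cannot tend to a positive limit at infinity, since $\phi\to\ell>0$ would give $-(r^{n-1}\phi')'\ge\sqrt p\,\ell^{p}r^{n-1}$ for large $r$, forcing $\phi'(r)\le-cr$ and $\phi\to-\infty$. So exactly one of the following holds: either $\phi>0$ for all $r\ge0$ (with $\phi(r)\to0$), in which case $(\phi,\phi)$ is the desired entire solution of (\ref{ODE}); or $\phi$ has a smallest zero $R\in(0,\infty)$.

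The main point is to rule out the second alternative. Suppose $\phi(R)=0$ with $\phi>0$ on $[0,R)$. From $r^{n-1}\phi'(r)=-\sqrt p\int_0^r s^{n-1}\phi^{p}(s)\,ds$, $\phi'$ extends continuously to $[0,R]$, and $\phi$ is smooth on $[0,R)$; hence $u(x):=\phi(|x|)$ and $v:=u$ belong to $C^{2}(D)\cap C^{1}(\bar D)$, $D:=B_{R}(0)$. This pair is positive and radial in $D$, vanishes on $\partial D$, and satisfies $-\Delta u=\sqrt p\,u^{p}=\sqrt p\,u^{p-1}v$ and $-\Delta v=\sqrt p\,u^{p}$ in $D$, so it solves (\ref{L1})--(\ref{L3}). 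Since $p\ge2^{*}-1$, this contradicts Theorem~\ref{th6.3}. Therefore $\phi>0$ on $[0,\infty)$ and the proof is complete.

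The only step I expect to require genuine care is the boundary regularity invoked at the end --- checking that a finite first zero of $\phi$ really produces a $C^{2}(D)\cap C^{1}(\bar D)$ solution of (\ref{L1})--(\ref{L3}) to which Theorem~\ref{th6.3} applies; everything else is routine ODE theory. An alternative, closer to the ``shooting'' terminology, is to work directly with the parameter $a$: the sets $\{a>0:\ U\text{ vanishes at a finite radius before }V\}$ and $\{a>0:\ V\text{ vanishes at a finite radius before }U\}$ are open by continuous dependence, the first is nonempty for $a$ large and the second for $a$ small (by comparing how fast $U$ and $V$ decrease away from $r=0$, where $p\ge2$ is convenient), and the simultaneous-vanishing case is excluded exactly as above by Theorem~\ref{th6.3}; connectedness of $(0,\infty)$ then forces the existence of an $a$ giving an entire solution. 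Choosing $a=1$ is simply the most economical instance, and it exhibits the entire solution as a supercritical Lane--Emden ground state, which is why it is expected to decay with the slow rate $\frac{2}{p-1}$, as in Theorem~\ref{th1.3}.
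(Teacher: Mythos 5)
Your proposal is correct, but it takes a genuinely different route from the paper. You set $a=1$, observe that the system then collapses to the scalar Lane--Emden problem $-\Delta\phi=\sqrt p\,\phi^{p}$, and rule out a first zero of $\phi$ by applying the Pohozaev-based nonexistence result (Theorem \ref{th6.3}) to the pair $(\phi,\phi)$ on the ball $B_{R}$; this immediately produces an entire positive solution with $u\equiv v$. The paper instead runs a genuine shooting argument in the parameter $a$: it shows $v_a$ stays above $u_a$ for $a>1$, that $v_a$ vanishes first for small $a$, defines the threshold $\underline a$ as the supremum of the latter set, excludes single-component vanishing at $\underline a$ by $C^{1}$-continuous dependence, and uses Theorem \ref{th6.3} only to exclude simultaneous vanishing. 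Your shortcut is legitimate because the theorem only asks for \emph{some} $a$, and it is consistent with the paper's own Theorem \ref{th4.4}, which later proves $u\equiv v$ for the entire solution (so the paper's $\underline a$ is in fact $1$); what it buys is brevity and an explicit identification of the solution as the supercritical radial Lane--Emden solution, hence the slow decay rate $\frac{2}{p-1}$. What the paper's threshold argument buys is robustness: it is the machinery one would need if the system did not collapse to a single equation under the ansatz $u=v$, and it exhibits the dichotomy structure in $a$ used in the subsequent analysis. The only points to nail down in your write-up are the ones you flag: that at a putative first zero $R$ the radial function extends to a $C^{2}(D)\cap C^{1}(\bar D)$ solution of (\ref{L1})--(\ref{L3}) (clear, since $0<\phi\le 1$ and $|\phi'(r)|\le \sqrt p\,r/n$ give continuous extension of $\phi,\phi',\phi''$ up to $r=R$, and smoothness at the origin is standard), and that Theorem \ref{th6.3} indeed admits the case $u\equiv v$ (it does, as its hypotheses place no constraint beyond positivity, radial symmetry and the boundary condition); your closing ``alternative'' sketch is essentially the paper's own proof and is not needed.
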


\begin{proof}
Here we use the shooting method.

{\it Step 1.} By the standard contraction argument, by $p \geq 2$ we can see the
local existence. We denote the solutions by $u_a(r),v_a(r)$.

{\it Step 2.} We claim that either (\ref{ODE}) has entire solutions for all $a>1$, or
for some $a^*>1$, there exists $R \in (0,1]$ such that
$u_{a^*}(r),v_{a^*}(r)>0$ for $r \in [0,R)$ and $u_{a^*}(R)=0$.

In fact, integrating (\ref{ODE}) twice yields
$$
v_a(r)=v_a(0)-\sqrt{p}\int_0^r\tau^{1-n}\int_0^\tau s^{n-1}u_a^p(s)dsd\tau,
$$
and
$$
u_a(r)=u_a(0)-\sqrt{p}\int_0^r\tau^{1-n}\int_0^\tau s^{n-1}u_a^{p-1}(s)v_a(s)dsd\tau.
$$
Thus,
\begin{equation} \label{minus}
v_a(r)-u_a(r)=(a-1)+\sqrt{p}\int_0^r\tau^{1-n}\int_0^\tau s^{n-1}u_a^{p-1}(s)(v_a(s)-u(s))dsd\tau.
\end{equation}
Let $a>1$. So we can find $\delta>0$ such that
$v_a(r)>u_a(r)$ for $r \in [0,\delta)$ by the continuity of $u_a,v_a$.

We claim $v_a(r)>u_a(r)$ for all $r \geq 0$.
Otherwise, there exists $r_0 \geq \delta$ such that $v(r_0)=u(r_0)$ and $v(r)>u(r)$ as $r \in [0,r_0)$.
From (\ref{minus}) with $r=r_0$ we can deduce a contradiction easily.

Therefore, if $u_a(r)>0$ for all $r \geq 0$, then (\ref{ODE}) has entire solutions and the proof is complete.
Otherwise, we can find $R>0$ such that $u_a(r),v_a(r)>0$ for $r \in (0,R)$ and $u_a(R)=0$.
We denote the $a$  in this state by $a^*$.

{\it Step 3.} We claim that for $a<\varepsilon_0=\frac{1}{n2^{1+p}\sqrt{p}}$, there
exists $R \in (0,1]$, such that $u_a(r),v_a(r)>0$ for $r \in [0,R)$ and
$v_a(R)=0$.

In fact, from (\ref{ODE}) we obtain $u(r)>v(r)$ for $r \geq 0$ and $u_a',v_a'<0$
for $r>0$. Thus,
$v_a(r) \leq a<\varepsilon_0$ and $u_a(r) \leq u_a(0)=1$. Thus,
$$
u_a(r)=u_a(0)-\sqrt{p}\int_0^r\tau^{1-n}\int_0^\tau s^{n-1}u_a^{p-1}(s)v_a(s)dsd\tau
\geq 1-\frac{\varepsilon_0\sqrt{p}r^2}{2n} \geq \frac{1}{2},
$$
for $r \in (0,1)$. Therefore,
$$
v_a(r)=v_a(0)-\sqrt{p}\int_0^r\tau^{1-n}\int_0^\tau s^{n-1}u_a^p(s)dsd\tau
< \varepsilon_0-\frac{\sqrt{p}}{2^p}\frac{r^2}{2n}.
$$
This proves that for $a < \varepsilon_0$, we can find $R \in
(0,1]$ such that $u_a(r),v_a(r)>0$ for $r \in (0,R)$ and
$v_a(R)=0$.

{\it Step 4.} Let $\underline{a}=\sup \underline{S}$, where
$$\begin{array}{ll}
\underline{S}:=\{\varepsilon; &\hbox{when}~ a \in (0,\varepsilon),
\quad\exists R_a>0, ~\hbox{such that}\\
&u_a(r)>0, v_a(r) \geq 0, ~\hbox{for}~ r \in [0,R_a], v_a(R_a)=0\}.
\end{array}
$$
Clearly, $\underline{S} \neq \emptyset$ by virtue of $\varepsilon_0
\in \underline{S}$. From Step 2, it follows $\varepsilon \leq a^*$ for $\varepsilon
\in \underline{S}$. Namely, $\underline{S}$ is upper bounded, and hence
we see the existence of $\underline{a}$.

{\it Step 5.} Write $\bar{u}(r)=u_{\underline{a}}(r)$ and
$\bar{v}(r)=v_{\underline{a}}(r)$. We claim that $\bar{u}(r),\bar{v}(r)>0$
for $r \in [0,\infty)$, and hence they are entire positive solutions of
(\ref{ODE}).

Otherwise, there exists $\bar{R}>0$ such that $\bar{u}(r),\bar{v}(r)>0$
for $r \in (0,\bar{R})$ and one of the following consequences holds:
$$\begin{array}{ll}
&(1)~~ \bar{u}(\bar{R})=0, \bar{v}(\bar{R})>0;\\[3mm]
&(2)~~ \bar{v}(\bar{R})=0, \bar{u}(\bar{R})>0;\\[3mm]
&(3)~~ \bar{u}(\bar{R})=0, \bar{v}(\bar{R})=0.
\end{array}
$$
We deduce the contradictions from three consequences above.

(1) By $C^1$-continuous dependence of $u_a,v_a$ in $a$, and the fact
$\bar{u}'(\bar{R})<0$, we see that for all $|a-\underline{a}|$ small,
there exists $R_a>0$ such that
$$\begin{array}{ll}
&\bar{u}(r),\bar{v}(r)>0, \quad for ~r \in (0,R_a);\\
&\bar{u}(R_a)=0, \quad \bar{v}(R_a)>0.
\end{array}
$$
This contradicts with the definition of $\underline{a}$.

(2) Similarly, for $|a-\underline{a}|$ small, there exists $R_a>0$ such that
$$\begin{array}{ll}
&\bar{u}(r),\bar{v}(r)>0, \quad for ~r \in (0,R_a);\\
&\bar{u}(R_a)>0, \quad \bar{v}(R_a)=0.
\end{array}
$$
This implies that $\underline{a}+\delta \in \underline{S}$ for some $\delta>0$,
which contradicts with the definition of $\underline{a}$.

(3) The consequence implies that $u(x)=\bar{u}(|x|)$ and $v(x)=\bar{v}(|x|)$
are solutions of the system
\begin{equation} \label{bs}
 \left \{
   \begin{array}{l}
      -\Delta u=\sqrt{p}u^{p-1}v, \quad -\Delta v=\sqrt{p}u^p, ~in ~B_R,\\
      u,v>0 ~in ~B_R, \quad u=v=0 ~on ~\partial B_R.
   \end{array}
   \right.
\end{equation}
It is impossible by Theorem \ref{th6.3}.

All the contradictions show that our claim is true. Thus, the entire positive solutions
exist.
\end{proof}

\begin{theorem} \label{th4.4}
The entire solutions obtained in Theorem \ref{th6.1} satisfy
\begin{equation} \label{jiaolv}
\lim_{|x| \to \infty}u(x)=\lim_{|x| \to \infty}v(x)=0
\end{equation}
and $u \equiv v$. Moreover, either
$$
p=2^*-1\quad \textrm{and} \quad u=u_*,
$$
or
$$
p>2^*-1\quad \textrm{and} \quad c_1 \leq
u(x)|x|^{\frac{2}{p-1}} \leq c_2
$$
when $|x| \to \infty$, where
$c_1,c_2$ are positive constants.
\end{theorem}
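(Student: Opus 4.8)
The plan is to establish the three claims — the decay relation (\ref{jiaolv}), the identity $u\equiv v$, and the dichotomy on the asymptotic rate — in that order. The decay and the identity are fairly soft consequences of the shooting construction of Theorem~\ref{th6.1}; the real work is in pinning down the slow rate in the supercritical range, and that is where I expect the main obstacle.

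\textbf{Decay.} First I would isolate the elementary fact that a radial positive solution $(U,V)$ of (\ref{ODE}) with $U$ nonincreasing and bounded below by some $\ell>0$ cannot exist: integrating $-(r^{n-1}V')'=\sqrt p\,r^{n-1}U^{p}\ge\sqrt p\,\ell^{p}r^{n-1}$ from $0$ gives $-V'(r)\ge\frac{\sqrt p\,\ell^{p}}{n}r$, and a second integration gives $V(r)\le V(0)-\frac{\sqrt p\,\ell^{p}}{2n}r^{2}\to-\infty$, contradicting $V>0$. For the entire solutions $\bar u=u_{\underline{a}}$, $\bar v=v_{\underline{a}}$ of Theorem~\ref{th6.1} one has $U',V'<0$ for $r>0$ because $-(r^{n-1}U')'=\sqrt p\,r^{n-1}U^{p-1}V<0$; hence the monotone limits at infinity exist, and the fact above forces $\bar u(\infty)=0$. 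Once $u\equiv v$ is proved this also gives $\bar v(\infty)=0$, i.e. (\ref{jiaolv}).

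\textbf{The identity $u\equiv v$.} Here I would show the shooting level is $\underline{a}=1$. If $a<1$, then (\ref{minus}) gives $v_a(r)\le u_a(r)+(a-1)$ as long as both stay positive, so while $v_a\ge 0$ we have $u_a\ge 1-a>0$; by the fact above $v_a$ must vanish at a finite radius $R$, with $u_a(R)\ge 1-a>0$, so $a$ lies in the ``$v$ vanishes first'' regime and $\underline{a}\ge 1$. If $a>1$, then (\ref{minus}) gives $v_a>u_a$ everywhere, so $v_a$ cannot vanish before $u_a$ and such $a$ is excluded, giving $\underline{a}\le 1$. Hence $\underline{a}=1$; since then $U(0)=V(0)=1$ and $U'(0)=V'(0)=0$, and $p\ge 2$ makes the first-order system locally Lipschitz, uniqueness identifies $(\bar u,\bar v)$ with $(w,w)$, where $w$ solves the Lane--Emden equation $-\Delta w=\sqrt p\,w^{p}$ with $w(0)=1$, $w'(0)=0$. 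Thus $\bar u\equiv\bar v=:w$ is a positive, radial, decreasing, entire solution of $-\Delta w=\sqrt p\,w^{p}$ with $w(\infty)=0$.

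\textbf{The rate.} If $p=2^*-1$, the function $c\bigl(\tfrac{t}{t^{2}+|x|^{2}}\bigr)^{(n-2)/2}$, with $c=c(n)$ fixed by the equation and $t$ chosen so it equals $1$ at the origin, is a radial solution of the same ODE with the same Cauchy data, so $w=u_*$ by uniqueness (equivalently, invoke the classification of \cite{CL} as in Theorem~\ref{th3.4}). If $p>2^*-1$, I would use the Emden--Fowler change of variables $w(r)=r^{-2/(p-1)}y(\ln r)$, which turns the equation into the autonomous ODE
\begin{equation*}
y''+\alpha y'=\beta y-\sqrt p\,y^{p},\qquad \alpha=n-2-\tfrac{4}{p-1}>0,\quad \beta=\tfrac{2}{p-1}\bigl(n-2-\tfrac{2}{p-1}\bigr)>0,
\end{equation*}
whose equilibria (with $y'=0$) are $y=0$ and $y_\infty=(\beta/\sqrt p)^{1/(p-1)}>0$. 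The functional $E(t)=\tfrac12(y')^{2}+\tfrac{\sqrt p}{p+1}y^{p+1}-\tfrac{\beta}{2}y^{2}$ satisfies $E'=-\alpha(y')^{2}\le 0$ and is bounded below on $\{y\ge 0\}$, so $y$ and $y'$ stay bounded and, by a standard invariance-principle (Barbalat) argument, $y(t)$ converges as $t\to\infty$ to $0$ or to $y_\infty$. The crux is to exclude $y\to 0$: the origin is a hyperbolic saddle (characteristic roots $\tfrac{2}{p-1}>0$ and $\tfrac{2}{p-1}-(n-2)<0$), so $y\to 0$ would force the orbit onto the one-dimensional stable manifold and hence $y(t)\sim Ce^{(\frac{2}{p-1}-(n-2))t}$, i.e. $w(r)=O(r^{-(n-2)})$; but then $w\in L^{n(p-1)/2}(R^{n})$, and Theorem~\ref{th3.3} would give $p=2^*-1$, contradicting $p>2^*-1$. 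Therefore $y\to y_\infty>0$, i.e. $w(x)|x|^{2/(p-1)}\to y_\infty$, and in particular $c_1\le w(x)|x|^{2/(p-1)}\le c_2$ for large $|x|$ with any $0<c_1<y_\infty<c_2$. Making this phase-plane analysis rigorous — and, above all, ruling out the fast decay — is the step I expect to be delicate; everything else is either direct integration or bookkeeping on the shooting parameter.
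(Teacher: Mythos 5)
Your proposal is correct, but it takes a genuinely different route from the paper at the two decisive points. For $u\equiv v$ the paper never computes the shooting level: it first proves (\ref{jiaolv}) by working with the integral system (\ref{IE}) (deriving $u(x)\le c|x|^{-2/(p-1)}$ from $u(x)\ge c|x|^4u^{2p-1}(x)$ and then splitting the Newton potential to get $v\to0$), and only then runs the comparison argument of \cite{LM}, integrating the radial ODE between a hypothetical crossing point $r_0$ and the next coincidence point $R_0$ (possibly $R_0=\infty$, which is exactly where the decay is needed). You instead pin down $\underline{a}=1$: for $a<1$ the identity (\ref{minus}) keeps $u_a\ge 1-a>0$ while $v_a\ge0$, so $v_a$ must vanish at a finite radius with $u_a$ still positive, giving $\underline{a}\ge1$, while Step 2 of Theorem \ref{th6.1} ($a>1\Rightarrow v_a>u_a$) gives $\underline{a}\le1$; ODE uniqueness at $a=1$ (the contraction argument, using $p\ge2$) then identifies $(\bar u,\bar v)$ with the Lane--Emden profile, and the decay of $v$ comes for free. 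This shortcut yields the extra information $\underline{a}=1$ and bypasses the paper's integral-equation estimates, though it is tied to the normalization $U(0)=1$, whereas the \cite{LM} argument applies to any radial entire solution once (\ref{jiaolv}) is known and does not care what the shooting level is. For the asymptotic rate the paper simply quotes the fast/slow dichotomy of \cite{YiLi} and \cite{LN} and sorts the alternatives with Theorem \ref{th1.2} and \cite{CL}; you reprove the needed dichotomy via the Emden--Fowler variables, the Lyapunov functional $E$ with $E'=-\alpha(y')^2$, LaSalle/Barbalat, and the stable-manifold rate at the hyperbolic saddle $y=0$, excluding fast decay through Theorem \ref{th3.3}. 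This is self-contained modulo standard ODE facts (the hypotheses $p>2^*-1$ and $p\ge2$ give $\alpha>0$, $\beta>0$ and a $C^1$ nonlinearity, so hyperbolicity and the stable manifold theorem apply), at the cost of more machinery; note only that the stable-manifold asymptotics strictly give $w=O(r^{-(n-2)+\epsilon})$ for small $\epsilon>0$, which is still enough for $w\in L^{\frac{n(p-1)}{2}}(R^n)$ because $p>\frac{n}{n-2}$, so the contradiction with Theorem \ref{th3.3} does go through.
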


\begin{proof}
{\it Step 1.}
We claim $\lim_{r \to \infty}\bar{u}(r)=0$.

Eq. (\ref{ODE}) implies $\bar{u}'<0$ and $\bar{v}'<0$ for $r>0$. So $\bar{u}$
and $\bar{v}$ are decreasing positive solutions, and
$\lim_{r \to \infty}\bar{u}(r)$, $\lim_{r \to \infty}\bar{v}(r)$ exist.

If there exists $c>0$ such that $\bar{u}(r) \geq c$ for $r>0$, then (\ref{ODE})
shows that $\bar{v}$ satisfies
$$
\bar{v}''+\frac{n-1}{r}\bar{v}' \leq -\sqrt{p}c^p.
$$
Integrating twice yields
$$
\bar{v}(r) \leq \bar{v}(0)-\frac{\sqrt{p}c^pr^2}{2n}
$$
for $r>0$. It is impossible since $\bar{v}$ is an entire positive solution.
This shows that $\bar{u} \to 0$ when $r \to \infty$.

{\it Step 2.}
Furthermore, we claim $u(x)\leq c_2|x|^{\frac{-2}{p-1}}$
when $|x| \to \infty$.

In fact, since $u(x),v(x)$ are the positive entire solutions of
(\ref{Sys}), they satisfy (\ref{IE}) according to the Remark
in section 1. Moreover,
$u(x),v(x)$ are radially symmetric and decreasing,
we can deduce
$$
v(x) \geq cu^p(x)\int_0^{|x|}r^2\frac{dr}{r} \geq c|x|^2u^p(x),
$$
and hence
$$
u(x) \geq cu^{p-1}(x)v(x)\int_0^{|x|}r^2\frac{dr}{r}
\geq c|x|^2u^{p-1}(x)v(x) \geq c|x|^4u^{2p-1}(x).
$$
This implies our claim.

{\it Step 3.} We claim $\lim_{|x| \to \infty}v(x)=0$.

Clearly, for large $R>0$, we obtain from (\ref{IE}) that
$$\begin{array}{ll}
v(x)&=c_2(\displaystyle\int_{B_R(0)}\frac{u^p(y)dy}{|x-y|^{n-2}}
+\int_{B_{|x|/2}(x)}\frac{u^p(y)dy}{|x-y|^{n-2}}\\[3mm]
&\quad +\displaystyle\int_{B_R^c(0)\setminus B_{|x|/2}(x)}
\frac{u^p(y)dy}{|x-y|^{n-2}})\\[4mm]
&:=c_2(I_1+I_2+I_3).
\end{array}
$$

First, $u$ is bounded. So, as $|x| \to \infty$,
$$
I_1 \leq C\int_{B_R(0)}\frac{dy}{|x-y|^{n-2}} \leq C|x|^{2-n} \to 0.
$$
Next, for large $|x|$, Step 2 implies
$u(y) \leq c|x|^{\frac{-2}{p-1}}$ as $y \in B_{|x|/2}(x)$. Therefore,
$$
I_2 \leq c|x|^{\frac{-2p}{p-1}}\int_{B_{|x|/2}(x)}\frac{dy}{|x-y|^{n-2}}
\leq C|x|^{2-\frac{2p}{p-1}} \to 0.
$$
Finally, using the Young inequality and letting $R \to \infty$, we have
$$
I_3 \leq C\int_{B_R^c(0)\setminus B_{|x|/2}(x)}\frac{dy}{|x-y|^{n-2}|y|^{\frac{2p}{p-1}}}
\leq C\int_R^\infty r^{2-\frac{2p}{p-1}}\frac{dr}{r} \to 0.
$$

Combining the estimates of $I_1,I_2$ and $I_3$, we get
$$
\lim_{|x| \to \infty}v(x)=0.
$$
This result, together with Step 1, implies (\ref{jiaolv}).

{\it Step 4.} We claim $u \equiv v$. The argument in Step 1 of the proof
of Theorem \ref{th3.4} does not work, since the boundary integral
is difficult to handle. We use the idea in \cite{LM} to prove this uniqueness.
This idea is universally valid, which implies the uniqueness as long as
(\ref{jiaolv}) holds. In addition, this idea is also independent whether the
value of $p$ is critical or not.

For any $r_0 \geq 0$, we prove $U(r_0)=V(r_0)$. Otherwise, either
\begin{equation} \label{fou1}
U(r_0)<V(r_0),
\end{equation}
or
\begin{equation} \label{fou2}
U(r_0)>V(r_0).
\end{equation}

If (\ref{fou1}) holds, by the continuity of $U$ and $V$, we can find $R>0$ such that
\begin{equation} \label{LL1}
U(r)<V(r) \quad as \quad r \in [r_0,R).
\end{equation}
Set $R_0=\sup\{R; (\ref{LL1})$ is true$\}$. Thus, when $R_0<\infty$,
\begin{equation} \label{nojiya}
U(R_0)=V(R_0).
\end{equation}
In view of (\ref{jiaolv}), this conclusion still holds
even if $R_0=\infty$.

By (\ref{Sys}), we see that for $r>0$,
$$
-(r^{n-1}U')'=\sqrt{p}r^{n-1}U^{p-1}V, \quad -(r^{n-1}V')'=\sqrt{p}r^{n-1}U^p.
$$
Integrating twice, we get
$$
U(R_0)=U(r_0)-\sqrt{p}\int_{r_0}^{R_0}r^{1-n}\int_0^r s^{n-1}U^{p-1}(s)V(s)dsdr,
$$
$$
V(R_0)=V(r_0)-\sqrt{p}\int_{r_0}^{R_0}r^{1-n}\int_0^r s^{n-1}U^p(s)dsdr.
$$
Using (\ref{fou1}), (\ref{LL1}) and (\ref{nojiya}), we obtain
$$
0>U(r_0)-V(r_0)=\sqrt{p}\int_{r_0}^{R_0}r^{1-n}\int_0^r s^{n-1}U^{p-1}(s)(V(s)-U(s))dsdr \geq 0,
$$
which is impossible. So (\ref{fou1}) is not true. Similarly,
we also see that (\ref{fou2}) is not true. Since $r_0$ is arbitrary,
we know $u \equiv v$.

{\it Step 5.} By virtue of $u \equiv v$, (\ref{Sys}) is reduced to
the single equation
$$
-\Delta u=\sqrt{p} u^p, \quad u>0 ~in~R^n
$$
with (\ref{nc}). According to the results in \cite{YiLi} and
\cite{LN}, we know that $u$ either
decays fast
$$
c_1 \leq u(x)|x|^{n-2} \leq c_2,
$$
or decays slowly
$$
c_1 \leq u(x)|x|^{\frac{2}{p-1}} \leq c_2,
$$
when $|x| \to \infty$. Here $c_1,c_2$ are positive constants.

If $u$ decays fast, by Theorem \ref{th1.2} we know $p=2^*-1$ and
$u \equiv u_*$. Here $u_*$ is the radial function in (\ref{wlty}).

If $u$ decays slowly, we claim $p>2^*-1$. Otherwise, from (\ref{nc})
we have $p=2^*-1$. According to the classification result in \cite{CL},
$u \equiv u_*$. This contradicts the slow decay rate.
\end{proof}




\begin{thebibliography}{99}

\bibitem{Amst}
    A. Ambrosetti,
    \emph{On Schr\"odinger-Poisson systems}, Milan J.
    Math., \textbf{76} (2008),  257--274.

\bibitem{BAM}
    N. Ben Abdallah, F. Mehats,
    \emph{On a Vlasov-Schr\"odinger-Poisson model}, Comm. Partial Differential Equations,
    \textbf{29} (2004), 173--206.

\bibitem{CGS}
    L. Caffarelli, B. Gidas, J. Spruck,
    \emph{Asymptotic symmetry and local behavior of semilinear
    elliptic equations with critical Sobolev growth}, Comm. Pure
    Appl. Math. \textbf{42} (1989), 271--297.

\bibitem{Caz} T. Cazenave,
     \emph{Semilinear schr\"odinger equations},
     Courant Lecture Notes in Mathematics, 10. New York University,
     Courant Institute of Mathematical Sciences, New York;
     American Mathematical Society, Providence, RI, 2003.

\bibitem{CL}
    W. Chen, C. Li,
    \emph{Classification of solutions of some
    nonlinear elliptic equations}, Duke Math. J., \textbf{63} (1991),
    615--622.

\bibitem{CLO}
    W. Chen, C. Li, B. Ou,
    \emph{Classification
    of solutions for an integral equation}, Comm. Pure Appl.
    Math., \textbf{59} (2006),  330--343.

\bibitem{CLO4}
    W. Chen, C. Li, B. Ou,
    \emph{Qualitative properties
    of solutions for an integral equation}, Discrete Contin. Dyn. Syst.,
    \textbf{12} (2005), 347--354.

\bibitem{GNN}
    B. Gidas, W.-M. Ni, L. Nirenberg,
    \emph{Symmetry of positive solutions
    of nonlinear elliptic equations in $R^{n}$} (collected in the
    book \emph{Mathematical Analysis and Applications}, which is vol. 7a
    of the book series \emph{Advances in Mathematics. Supplementary
    Studies}, Academic Press, New York, 1981.)

\bibitem{GS}
      B. Gidas, J. Spruck,
      \emph{Global and local behavior of positive solutions of nonlinear elliptic
      equations}, Comm. Pure Appl. Math., \textbf{34}
    (1981), 525--598.

\bibitem{GV} J. Ginibre, G. Velo,
     \emph{Long range scattering and modified wave operators for some Hartree type equations},
     Rev. Math. Phys., \textbf{12} (2000), 361--429.

\bibitem{HW}
      E. Hebey, J. Wei,
      \emph{Schr\"odinger-Poisson systems in the 3-sphere}, Calc. Var. Partial
    Differential Equations, \textbf{47} (2013), 25--54.

\bibitem{JL-Z}
      L. Jeanjean, T. Luo,
      \emph{Sharp nonexistence results of prescribed $L^2$-norm solutions for some class
    of Schr\"odinger-Poisson and quasi-linear equations}, Z. Angew. Math. Phys.,
    \textbf{64} (2013), 937--954.

\bibitem{JL}
      C. Jin, C. Li,
      \emph{Qualitative analysis of
    some systems of integral equations}, Calc. Var. Partial
    Differential Equations, \textbf{26}
    (2006), 447--457.

\bibitem{Lei}
      Y. Lei,
      \emph{On the regularity of positive solutions of a class of
      Choquard type equations},
      Math. Z., \textbf{273} (2013), 883--905.

\bibitem{SIAM}
      Y. Lei,
      \emph{Qualitative analysis for the static Hartree-type equations},
      SIAM J. Math. Anal., \textbf{45} (2013), 388--406.

\bibitem{LLM}
      Y. Lei, C. Li, C. Ma,
      \emph{Asymptotic radial symmetry and growth estimates of
      positive solutions to weighted Hardy-Littlewood-Sobolev system},
      Calc. Var. Partial Differential Equations, \textbf{45}
      (2012), 43--61.

\bibitem{LM}  C. Li, L. Ma,
      \emph{Uniqueness of positive bound states to
      Schr\"odinger systems with critical exponents},
      SIAM J. Math. Anal., \textbf{40} (2008),  1049--1057.


\bibitem{LMZ} D. Li, C. Miao, X. Zhang,
      \emph{The focusing energy-critical Hartree equation},
      J. Differential Equations, \textbf{246} (2009), 1139--1163.

\bibitem{YLi}
    Y.-Y. Li,
    \emph{Remark on some conformally invariant integral equations:
    the method of moving spheres}, J. Eur. Math.
    Soc., \textbf{6} (2004), 153--180.

\bibitem{YiLi}
    Y. Li,
    \emph{Asymptotic behavior of positive solutions of equation $\Delta u+K(x)u^p=0$ in $R^n$},
    J. Differential Equations, \textbf{95} (1992), 304--330.

\bibitem{LN}
    Y. Li, W.-M. Ni,
    \emph{On conformal scalar curvature equations in $R^n$},
    Duke Math. J., \textbf{57} (1988), 895--924.

\bibitem{Lieb}
    E. Lieb,
    \emph{Sharp constants in the Hardy-Littlewood-Sobolev and
    related inequalities}, Ann. of Math., \textbf{118} (1983),
    349--374.

\bibitem{LS}
    E. Lieb, B. Simon,
    \emph{The Hartree-Fock theory for Coulomb systems},
    Comm. Math. Phys., \textbf{53} (1977), 185--194.

\bibitem{MZ} L. Ma, L. Zhao,
      \emph{Classification of positive solitary solutions
      of the nonlinear Choquard equation},
      Arch. Rational Mech. Anal., \text{195} (2010),
     455--467.

\bibitem{MVS}  V. Moroz, J. Van Schaftingen,
      \emph{Nonexistence and optimal decay of supersolutions
      to Choquard equations in exterior domains},
      J. Differential Equations, \textbf{254} (2013), 3089--3145.

\bibitem{Na} K. Nakanishi,
     \emph{Energy scattering for Hartree equations},
     Math. Res. Lett., \textbf{6} (1999), 107--118.

\bibitem{QS} P. Quittner, P. Souplet,
     \emph{Symmetry of components for semilinear elliptic systems},
     SIAM J. Math. Anal., \textbf{44} (2012), 2545--2559.

\bibitem{Smets}  D. Smets,
      \emph{Nonlinear Schr\"odinger equations with Hardy potential and critical nonlinearities},
      Trans. Amer. Math. Soc., \textbf{357} (2005), 2909--2938.


\end{thebibliography}
\end{document}